\newtheorem{theorem}{Theorem}
\newtheorem{lemma}{Lemma}
\newtheorem{claim}{Claim}
\title{{\bf  Some Results on $k$-Critical $P_5$-Free Graphs}}
\author{
Qingqiong Cai\thanks{College of Computer Science, Nankai University, Tianjin 300350, China.Supported by Natural Science Foundation of Tianjin (19JCQNJC14400).}
\and
Jan Goedgebeur \thanks{Department of Applied Mathematics, Computer Science and Statistics, Ghent University, 9000 Ghent, Belgium.} \thanks{Department of Computer Science, KU Leuven Campus Kulak, 8500 Kortrijk, Belgium.} 
\and
Shenwei Huang\thanks{The corresponding author. College of Computer Science, Nankai University, Tianjin 300350, China.  Supported by the National Natural Science Foundation of China (11801284) and Natural Science Foundation of Tianjin (20JCYBJC01190).}
}
\date{August 12, 2021}
\begin{document}

\maketitle

\begin{abstract} 
%Given two graphs $H_1$ and $H_2$, a graph $G$ is $(H_1,H_2)$-free if it contains no induced subgraph isomorphic to $H_1$ or $H_2$. 
%Let $P_t$ be the path on $t$ vertices.
A graph $G$ is $k$-vertex-critical if $G$ has chromatic number $k$ but every proper induced subgraph of $G$ has chromatic number less than $k$.
The study of $k$-vertex-critical graphs for graph classes is an important topic in algorithmic graph theory because
if the number of such graphs that are in a given hereditary graph class is finite, then there is a polynomial-time algorithm
to decide if a graph in the class is $(k-1)$-colorable. 

In this paper, we prove that for every fixed integer $k\ge 1$, there are only finitely many $k$-vertex-critical ($P_5$,gem)-free graphs and $(P_5,\overline{P_3+P_2})$-free graphs. To prove the results we use a known structure theorem for ($P_5$,gem)-free graphs combined with properties of $k$-vertex-critical graphs. 
 Moreover, we characterize all $k$-vertex-critical ($P_5$,gem)-free graphs
and $(P_5,\overline{P_3+P_2})$-free graphs for $k \in \{4,5\}$ using a computer generation algorithm.
\end{abstract}

{\bf Keywords.} Graph coloring; $k$-critical graphs; forbidden induced subgraphs; computer generation algorithm.

%------------------------------------------------------------------------------------

\section{Introduction}

All graphs in this paper are finite and simple.
We say that a graph $G$ {\em contains} a graph $H$ if $H$ is
isomorphic to an induced subgraph of $G$.  A graph $G$ is
{\em $H$-free} if it does not contain $H$. 
For a family of graphs $\mathcal{H}$,
$G$ is {\em $\mathcal{H}$-free} if $G$ is $H$-free for every $H\in \mathcal{H}$.
When $\mathcal{H}$ consists of two graphs, we write
$(H_1,H_2)$-free instead of $\{H_1,H_2\}$-free.

A \emph{$q$-coloring} of a graph $G$ is a function $\phi:V(G)\longrightarrow \{ 1, \ldots ,q\}$ such that
$\phi(u)\neq \phi(v)$ whenever $u$ and $v$ are adjacent in $G$.
Equivalently, a $q$-coloring of $G$ is a partition of $V(G)$ into $q$ independent sets.
A graph is {\em $q$-colorable} if it admits a $q$-coloring.
The \emph{chromatic number} of a graph $G$, denoted by
$\chi (G)$, is the minimum number $q$ for which $G$ is $q$-colorable.
The \emph{clique number} of $G$, denoted by $\omega(G)$, is the size of a largest clique in $G$.

A graph $G$ is {\em $k$-chromatic} if $\chi(G)=k$. We say that $G$ is {\em $k$-critical} if it is
$k$-chromatic and $\chi(G-e)<\chi(G)$ for any edge $e\in E(G)$. For instance, $K_2$ is the only 2-critical graph
and odd cycles are the only 3-critical graphs. A graph is {\em critical} if it is $k$-critical for some integer $k\ge 1$.
Critical graphs were first defined and studied by Dirac~\cite{Di51,Di52,Di52i} in the early 1950s,
and then by Gallai and Ore~\cite{Ga63, Ga63i,Or67} among many others, and more recently by
Kostochka and Yancey~\cite{KY14}. 

A weaker notion of criticality is the so-called vertex-criticality.
A graph $G$ is {\em $k$-vertex-critical} if $\chi(G)=k$ and $\chi(G-v)<k$ for any $v\in V(G)$.
For a set $\mathcal{H}$ of graphs and a graph $G$, we say that $G$ is {\em $k$-vertex-critical $\mathcal{H}$-free}
if it is $k$-vertex-critical and $\mathcal{H}$-free.
We are mainly interested in the following question.

\noindent {\bf The meta question.} Given a set $\mathcal{H}$ of graphs and an integer $k\ge 1$, 
are there only finitely many $k$-vertex-critical  $\mathcal{H}$-free graphs?

This question is important in the study of algorithmic graph theory because of the following theorem, whose proof can be found in~\cite{CHLS19}.
\begin{theorem}[Folklore]\label{thm:finiteness}
Given a set $\mathcal{H}$ of graphs and an integer $k\ge 1$,
if the set of all $k$-vertex-critical $\mathcal{H}$-free graphs is finite, then there is a polynomial-time algorithm to determine
whether an $\mathcal{H}$-free graph is $(k-1)$-colorable.  
\end{theorem}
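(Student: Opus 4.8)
The plan is to reduce the coloring problem to a finite forbidden-induced-subgraph test, exploiting the fact that a graph fails to be $(k-1)$-colorable precisely when it contains a $k$-vertex-critical induced subgraph. First I would establish the following structural characterization: for any graph $G$, one has $\chi(G) \ge k$ if and only if $G$ contains a $k$-vertex-critical graph as an induced subgraph. The backward direction is immediate, since an induced subgraph $H$ with $\chi(H) = k$ forces $\chi(G) \ge \chi(H) = k$. For the forward direction, I would take $G$ with $\chi(G) \ge k$ and choose, among all induced subgraphs $H$ of $G$ satisfying $\chi(H) \ge k$, one with the fewest vertices, call it $H^\ast$. Deleting a single vertex lowers the chromatic number by at most one, so if $\chi(H^\ast) \ge k+1$ then $\chi(H^\ast - v) \ge k$ for every vertex $v$, contradicting minimality; hence $\chi(H^\ast) = k$. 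Minimality also gives $\chi(H^\ast - v) < k$ for each $v$, so $H^\ast$ is exactly $k$-vertex-critical.

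The next step is to observe that $\mathcal{H}$-freeness is hereditary under taking induced subgraphs. Thus if $G$ is $\mathcal{H}$-free and $\chi(G) \ge k$, then the witnessing subgraph $H^\ast$ produced above is itself $\mathcal{H}$-free, and therefore belongs to the finite set $\mathcal{C}$ of all $k$-vertex-critical $\mathcal{H}$-free graphs guaranteed by the hypothesis. Combining this with the characterization, for an $\mathcal{H}$-free graph $G$ we have $\chi(G) \ge k$ if and only if $G$ contains some member of $\mathcal{C}$ as an induced subgraph. Equivalently, $G$ is $(k-1)$-colorable if and only if $G$ contains no member of $\mathcal{C}$ as an induced subgraph.

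Finally I would turn this equivalence into the algorithm. Let $N = \max_{H \in \mathcal{C}} |V(H)|$; since $\mathcal{C}$ is finite, both $N$ and $|\mathcal{C}|$ are constants depending only on $k$ and $\mathcal{H}$. For each of the finitely many $H \in \mathcal{C}$, I would test whether $H$ occurs as an induced subgraph of $G$ by examining every vertex subset of size $|V(H)| \le N$; this brute-force check runs in time $O(|V(G)|^{N})$, which is polynomial because $N$ is fixed. The input graph $G$ is $(k-1)$-colorable exactly when all of these tests fail, yielding a polynomial-time decision procedure. The only genuinely substantive point is the structural characterization of the first paragraph; once that is in place, the heredity of $\mathcal{H}$-freeness and the constant-size brute-force search are routine, so I do not anticipate a serious obstacle beyond verifying the minimal-subgraph argument carefully.
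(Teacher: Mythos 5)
Your proposal is correct and follows exactly the standard folklore argument that the paper itself does not reproduce but cites to~\cite{CHLS19}: every graph with chromatic number at least $k$ contains a $k$-vertex-critical induced subgraph, heredity of $\mathcal{H}$-freeness places that subgraph in the finite list, and a constant-size brute-force induced-subgraph search gives the polynomial-time test. No gaps; the minimal-subgraph argument is carried out correctly.
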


As usual, $P_t$ and $C_s$ denote
the path on $t$ vertices and the cycle on $s$ vertices, respectively. The complete
graph on $n$ vertices is denoted by $K_n$.
%The graph $K_3$ is also referred to as the {\em triangle}.
For two graphs $G$ and $H$, we use $G+H$ to denote the \emph{disjoint union} of $G$ and $H$.
For a positive integer $r$, we use $rG$ to denote the disjoint union of $r$ copies of $G$.
The \emph{complement} of $G$ is denoted by $\overline{G}$.
A {\em clique} (resp.\ {\em independent set}) in a graph is a set of pairwise adjacent (resp.\ nonadjacent) vertices.
If a graph $G$ can be partitioned into $k$ independent sets $S_1,\ldots,S_k$ such that there is an edge between every vertex in $S_i$
and every vertex in $S_j$ for all $1\le i<j\le k$, $G$ is called a {\em complete $k$-partite graph}; each $S_i$ is called a {\em part}
of $G$. If we do not specify the number of parts in $G$, we simply say that $G$ is a {\em complete multipartite graph}.
We denote by $K_{n_1,\ldots,n_k}$ the complete $k$-partite graph such that the $i$th part $S_i$ has size $n_i$, for each $1\le i\le k$.

In this paper, we continue to study $k$-vertex-critical graphs in the class of $P_5$-free graphs. 
Our research is mainly motivated by the following dichotomy result.

\begin{theorem}[\cite{CGHS21}]\label{thm:main}
Let $H$ be a graph of order 4 and $k\ge 5$ be a fixed integer.  
Then there are infinitely many $k$-vertex-critical $(P_5,H)$-free graphs if and only if
$H$ is $2P_2$ or $P_1+K_3$.
\end{theorem}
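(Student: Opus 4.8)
The statement is an equivalence quantified over the eleven graphs $H$ of order $4$, so the plan is to treat the ``if'' direction (construction of infinite families) and the ``only if'' direction (finiteness) separately, the latter by eliminating each remaining graph in turn. I would begin with the observations that settle the extremes and orient everything else. Since $P_4$ is an induced subgraph of $P_5$, the $(P_5,P_4)$-free graphs are exactly the cographs; these are perfect, so a $k$-vertex-critical cograph has $\omega=\chi=k$, contains $K_k$, and can carry no additional vertex, hence equals $K_k$. If $H=4K_1$ then $\alpha(G)\le 3$, so $|V(G)|\le\chi(G)\,\alpha(G)\le 3k$ and the order is bounded immediately. I would also record the trivial bound $\omega(G)\le\chi(G)=k$, which keeps every clique small throughout the finiteness argument.

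For the ``if'' direction I would, for each fixed $k\ge 5$, produce an explicit infinite sequence $(G_n)$ of $k$-vertex-critical graphs that is $(P_5,2P_2)$-free in one case and $(P_5,P_1+K_3)$-free in the other, and then verify for every $G_n$ that it is $P_5$-free and $H$-free, that $\chi(G_n)=k$, and that $\chi(G_n-v)=k-1$ for each vertex $v$. I expect designing these families to be the genuinely creative step, since the obvious attempts fail: joining a clique or an independent set to a fixed $2P_2$-free gadget preserves $2P_2$-freeness but destroys vertex-criticality, because deleting one vertex of a blown-up part leaves $\chi$ unchanged; and complete multipartite graphs, although $(P_5,2P_2)$-free, are vertex-critical only when they equal $K_k$. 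The correct construction must therefore spread the criticality across an unboundedly large part while keeping every vertex essential, so the core of the verification is to exhibit, uniformly in $n$, a $(k-1)$-coloring of $G_n-v$ for each $v$.

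The finiteness direction is the main obstacle, and I would drive all remaining cases with a single structural engine. A $k$-vertex-critical graph is connected, so by the Bacs\'o--Tuza theorem it has a dominating clique or a dominating $P_3$; since $\omega\le k$, this dominating set $D$ has at most $\max\{k,3\}=k$ vertices. Classifying each vertex of $V(G)\setminus D$ by its trace $N(v)\cap D$ produces at most $2^{k}$ types, so it suffices to bound the number of vertices in each type. The vertex-critical properties do the pruning: $\delta(G)\ge k-1$, and no non-adjacent pair $u,v$ can satisfy $N(u)\subseteq N(v)$, since otherwise a $(k-1)$-coloring of $G-u$ would extend by coloring $u$ like $v$. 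Thus two non-adjacent vertices of one type must have incomparable neighborhoods off $D$, and the forbidden graph $H$ is exactly the tool I would use to turn this incomparability into an absolute bound on the type's size.

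The case analysis concentrates on that last step, and this is where I expect the difficulty to peak. One case is clean: for $H$ equal to the paw, Olariu's theorem says each component is triangle-free or complete multipartite; triangle-free $P_5$-free graphs are $3$-colorable, so that branch is empty for $k\ge 5$, and the complete multipartite branch again forces $K_k$. For $H=K_4$ (so $\omega\le 3$), $H=C_4$, and $H=\text{diamond}$ I would exploit the strong local restrictions each imposes on neighborhoods --- for $C_4$, no two non-adjacent vertices share two non-adjacent common neighbors; for the diamond, every neighborhood induces a disjoint union of cliques --- to cap each type. The subtlest graphs are $K_2+2K_1$ and $P_3+K_1$, which lie one edge away from $2P_2$ and $P_1+K_3$ and still admit independent triples; for these I would have to show that an unboundedly large independent set inside a single type, interacting with the bounded dominating structure, is forced to create an induced $P_5$ or a copy of $H$, which bounds the order and closes the dichotomy.
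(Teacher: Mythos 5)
First, a point of orientation: the paper you are critiquing does not actually prove \autoref{thm:main}; it imports it from \cite{CGHS21} as a known result, so there is no in-paper proof to compare against and your proposal must stand as a self-contained argument. Judged that way, it has genuine gaps, and the largest one is the entire ``if'' direction. For $H=2P_2$ and $H=P_1+K_3$ the theorem requires, for every fixed $k\ge 5$, an explicit infinite family of $k$-vertex-critical $(P_5,H)$-free graphs. You correctly diagnose why the naive candidates (blow-ups of a fixed gadget, complete multipartite graphs) fail, but then only describe what a correct construction ``must'' do; no family $(G_n)$ is defined, let alone verified to be $P_5$-free, $H$-free, $k$-chromatic, and vertex-critical. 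That is a description of the problem, not a proof, and it is exactly the creative content of this half of the dichotomy: in \cite{CGHS21} it is settled by concrete constructions whose criticality is checked vertex by vertex.

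The finiteness direction also has concrete holes. Your case enumeration omits the claw $K_{1,3}$ entirely: it is one of the eleven graphs of order 4, it is neither $2P_2$ nor $P_1+K_3$, and so the theorem demands a finiteness proof for $(P_5,K_{1,3})$-free graphs that your outline never addresses. Moreover, of the cases you do name, only $P_4$ (cographs, hence $G=K_k$), $4K_1$ (the bound $|V(G)|\le \alpha(G)\chi(G)\le 3k$), and the paw (Olariu's theorem plus $3$-colorability of $(P_5,K_3)$-free graphs) are actually complete. For $K_4$, $C_4$, the diamond, $K_2+2K_1$, and $P_3+K_1$, the Bacs\'o--Tuza dominating-clique-or-$P_3$ skeleton with at most $2^{|D|}$ neighborhood types and the no-comparable-vertices property is a sensible engine (pigeonhole-on-types arguments of this flavor do appear in the literature, including in the present paper's own proofs for order-$5$ graphs $H$), but all of the difficulty lies in the step you defer: showing that many same-type vertices, whose pairwise incomparability supplies private neighbors, force an induced $P_5$ or a copy of $H$. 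For $K_2+2K_1$ and $P_3+K_1$ you explicitly concede this is not done. As it stands, the proposal is an outline with correct preliminary reductions, but both the constructions and the hard finiteness cases --- i.e., the substance of the theorem --- are missing.
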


This Theorem completely settles the finiteness question of $k$-vertex-critical $(P_5,H)$-free graphs for graphs of order 4.
In~\cite{CGHS21}, the authors also posed the natural question of which five-vertex graphs $H$ lead to finitely many
$k$-vertex-critical $(P_5,H)$-free graphs. 
It is known that there are exactly 13 5-vertex-critical $(P_5,C_5)$-free graphs~\cite{HMRSV15},
and that there are finitely many 5-vertex-critical ($P_5$,banner)-free graphs~\cite{CHLS19, HLS19}, and
finitely many $k$-vertex-critical $(P_5,\overline{P_5})$-free graphs for every fixed $k$~\cite{DHHMMP17}.
Hell and Huang proved that there are finitely many $k$-vertex-critical $(P_6,C_4)$-free graphs~\cite{HH17}.
This was later generalized to $(P_t,K_{r,s})$-free graphs in the context of $H$-coloring~\cite{KP19}. 
This gives an affirmative answer for $H=K_{2,3}$.
Apart from these, there seem to be very few results on the finiteness of $k$-vertex-critical graphs for $k\ge 5$.
In this paper, we prove two new finiteness results beyond 5-vertex-criticality.

\subsection{Our Contributions} 

We continue to study the subclasses of $P_5$-free graphs.
In particular,  we focus on $(P_5,H)$-free graphs when $H$ has order 5.
The {\em gem} graph is the graph consisting of an induced $P_4$ with an additional vertex that is adjacent to every vertex on the $P_4$.
In this paper, we prove that there are only finitely many $k$-vertex-critical $(P_5,H)$-free graphs
for every fixed $k\ge 1$ when $H$ is gem or $\overline{P_3+P_2}$ (see \autoref{fig:two_graphs} for drawings of gem and $\overline{P_3+P_2}$). 
Moreover, we characterize all 4- and 5-vertex-critical ($P_5$,gem)-free graphs and $(P_5,\overline{P_3+P_2})$-free graphs by extending the computer generation algorithm from~\cite{GS18}.

To prove the result on gem-free graphs, we used a known structure theorem for ($P_5$,gem)-free graphs that is used to give an optimal
$\chi$-binding function for the class combined with an inductive argument based on a careful analysis of the structure. 
For the $\overline{P_3+P_2}$-free case, we performed a careful structural analysis combined with the repeated use of the pigeonhole principle based on the properties of $k$-vertex-critical graphs.

The remainder of the paper is organized as follows. We present some preliminaries
in \autoref{sec:pre} and prove our new results in \autoref{sec:new}. 
Finally, we give some concluding remarks in \autoref{sec:classification}.

\begin{figure}[tb]
\centering
\begin{subfigure}{.5\textwidth}
\centering
\begin{tikzpicture}[scale=0.6]
\tikzstyle{vertex}=[draw, circle, fill=white!100, minimum width=4pt,inner sep=1pt]
%\draw[step=1cm,color=gray] (-5,-5) grid (5,5);

\node[vertex] (1) at (2,0) {};
\node[vertex] (2) at (3,0) {};
\node[vertex] (3) at (4,0) {};
\node[vertex] (4) at (5,0) {};
\node[vertex] (5) at (3.5,2) {};

\draw (1)--(2)--(3)--(4);
\draw (5)--(1) (5)--(2) (5)--(3) (5)--(4);
\end{tikzpicture}
%\caption{Gem.}
\end{subfigure}%
\begin{subfigure}{.5\textwidth}
\centering
\begin{tikzpicture}[scale=0.6]
\tikzstyle{vertex}=[draw, circle, fill=white!100, minimum width=4pt,inner sep=1pt]
%\draw[step=1cm,color=gray] (-5,-5) grid (5,5);

\node[vertex] (1) at (0,1) {};
\node[vertex] (2) at (1,0) {};
\node[vertex] (3) at (0,-1) {};
\node[vertex] (4) at (-1,0) {};
\node[vertex] (5) at (0,-2) {};

\draw (1)--(2)--(3)--(4)--(1);
\draw (1)--(3) (5)--(2) (5)--(4);
\end{tikzpicture}
%\caption{$\overline{P_3+P_2}$.}
\end{subfigure}%
\caption{The graphs gem (left) and $\overline{P_3+P_2}$ (right).}\label{fig:two_graphs} %All of these graphs are also 4-critical ($P_5$,gem)-free.}
\end{figure}
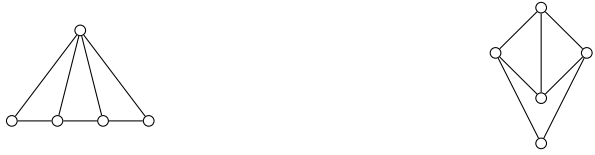

%------------------------------------------------------------------------------------

\section{Preliminaries}\label{sec:pre}

For general graph theory notation we follow~\cite{BM08}.
Let $G=(V,E)$ be a graph.  If $uv\in E$, we say that $u$ and $v$ are {\em neighbors} or {\em adjacent}; otherwise
$u$ and $v$ are {\em nonneighbors} or {\em nonadjacent}.
The \emph{neighborhood} of a vertex $v$, denoted by $N_G(v)$, is the set of neighbors of $v$.
For a set $X\subseteq V(G)$, let $N_G(X)=\bigcup_{v\in X}N_G(v)\setminus X$.
We shall omit the subscript whenever the context is clear.
For $X,Y\subseteq V$, we say that $X$ is \emph{complete} (resp.\ \emph{anticomplete}) to $Y$
if every vertex in $X$ is adjacent (resp.\ nonadjacent) to every vertex in $Y$.
If $X=\{x\}$, we write ``$x$ is complete (resp.\ anticomplete) to $Y$'' instead of ``$\{x\}$ is complete (resp.\ anticomplete) to $Y$''.
If a vertex $v$ is neither complete nor anticomplete to a set $S$, we say that $v$ is {\em mixed} on $S$.
We say that $H$ is a {\em homogeneous set} if no vertex in $V-H$ is mixed on $H$.
More generally, we say that $H$ is {\em homogeneous with respect to a subset} $S\subseteq V$ if no vertex in $S$ can be mixed on $H$.
A vertex is \emph{universal} in $G$ if it is adjacent to all other vertices.
Two nonadjacent vertices $u$ and $v$ are said to be \emph{comparable} if $N(v)\subseteq N(u)$ or $N(u)\subseteq N(v)$.
A vertex subset $K\subseteq  V$ is a \emph{clique cutset} if $G-K$ has more components than $G$ and $K$
induces a clique.  For $S\subseteq V$, the subgraph \emph{induced} by $S$, is denoted by $G[S]$.
A {\em $k$-hole} in a graph is an induced cycle $H$ of length $k\ge 4$. If $k$ is odd, we say that $H$ is an {\em odd hole}.
A {\em $k$-antihole} in $G$ is a $k$-hole in $\overline{G}$. Odd antiholes are defined analogously.
%The graph obtained from $C_k$ by adding a universal vertex, denoted by $W_k$, is called the {\em $k$-wheel}.

\medskip
We proceed with a few useful results that will be needed later. The first one is a folklore property of $k$-vertex-critical graphs.

\begin{lemma}[Folklore]\label{lem:clique cutsets}
A $k$-vertex-critical graph cannot contain clique cutsets.
\end{lemma}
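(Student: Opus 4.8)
The plan is to argue by contradiction: assume $G$ is $k$-vertex-critical yet possesses a clique cutset $K$, and then build a $(k-1)$-coloring of the whole of $G$, contradicting $\chi(G)=k$. First I would record the coloring property of vertex-criticality that does the real work: every \emph{proper} induced subgraph $H$ of $G$ satisfies $\chi(H)\le k-1$. Indeed, since $H$ is proper there is a vertex $v\in V(G)\setminus V(H)$, so $H$ is an induced subgraph of $G-v$, and by vertex-criticality $\chi(H)\le \chi(G-v)\le k-1$.

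Next I would unpack the clique cutset into a clique-sum decomposition. Since $G-K$ has more components than $G$, and $G$ has at least one component, $G-K$ has at least two; hence I can partition $V(G)\setminus K$ into two nonempty sets $A$ and $B$ that are anticomplete to each other (take $A$ to be one component of $G-K$ and $B$ the union of the remaining components, using that distinct components of $G-K$ have no edges between them and neither set touches $K$). Set $G_A=G[A\cup K]$ and $G_B=G[B\cup K]$. Both are proper induced subgraphs of $G$ (each omits a nonempty side), they share exactly the clique $K$, and every edge of $G$ lies inside $G_A$ or inside $G_B$, because there are no edges between $A$ and $B$. By the property above, I fix a $(k-1)$-coloring $\phi_A$ of $G_A$ and a $(k-1)$-coloring $\phi_B$ of $G_B$.

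The key step is to reconcile the two colorings on the shared clique $K$. Because $K$ induces a clique and is colored with at most $k-1$ colors inside $G_A$, we have $|K|\le \chi(G_A)\le k-1$; moreover both $\phi_A$ and $\phi_B$ assign the $|K|$ vertices of $K$ pairwise distinct colors from the palette $\{1,\dots,k-1\}$. Hence there is a permutation $\pi$ of $\{1,\dots,k-1\}$ carrying the colors that $\phi_B$ uses on $K$ onto the colors that $\phi_A$ uses on the corresponding vertices, so that $\pi\circ\phi_B$ agrees with $\phi_A$ on $K$; such a $\pi$ exists precisely because $|K|\le k-1$ leaves room to match up the used colors and extend arbitrarily elsewhere. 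I then define $\phi$ on $V(G)$ by $\phi=\phi_A$ on $A\cup K$ and $\phi=\pi\circ\phi_B$ on $B$. This is well defined since the two pieces agree on $K$, and it is a proper coloring because every edge of $G$ is internal to $G_A$ or to $G_B$, each of which is properly colored. Thus $\phi$ is a $(k-1)$-coloring of $G$, contradicting $\chi(G)=k$.

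I expect the matching-up of colors on $K$ to be the only delicate point, and it is exactly where the hypotheses are used: that $K$ is a clique forces its vertices to receive distinct colors in each coloring, while the bound $|K|\le k-1$ guarantees the palette is large enough to realign the two colorings by a single permutation. Everything else is bookkeeping about which edges live in which side of the decomposition.
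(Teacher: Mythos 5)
Your proof is correct. The paper states this lemma as folklore and provides no proof of its own, so there is nothing to compare against; your argument---splitting $G$ along the clique cutset $K$ into two proper induced subgraphs $G_A$ and $G_B$, taking a $(k-1)$-coloring of each, and composing one with a permutation of the palette so that the two colorings agree on $K$ (possible precisely because $K$ is a clique and hence rainbow in both colorings)---is exactly the standard folklore argument, with all the details (nonemptiness of both sides, properness of the merged coloring) handled correctly.
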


Another folklore property of vertex-critical graphs is that such graphs contain no comparable vertices. A generalization of this property was presented in~\cite{CGHS21}.

\begin{lemma}[\cite{CGHS21}]\label{lem:dominated subsets}
Let $G$ be a $k$-vertex-critical graph. Then $G$ has no two nonempty disjoint subsets $X$ and $Y$ of $V(G)$ that satisfy all the following conditions.

\begin{itemize}
\item $X$ and $Y$ are anticomplete to each other.
\item $\chi(G[X])\le \chi(G[Y])$.
\item $Y$ is complete to $N(X)$.
\end{itemize}
\end{lemma}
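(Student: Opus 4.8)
The statement generalizes the folklore fact that a vertex-critical graph has no two comparable vertices (take $X=\{u\}$ and $Y=\{v\}$ with $u,v$ nonadjacent and $N(u)\subseteq N(v)$). The plan is to argue by contradiction: assume such nonempty disjoint sets $X$ and $Y$ exist, build a proper $(k-1)$-coloring of the whole graph $G$ out of the hypotheses, and thereby contradict $\chi(G)=k$.

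First I would delete $X$. Since $X\neq\emptyset$, pick any $x\in X$; by $k$-vertex-criticality $G-x$ has a proper $(k-1)$-coloring, and its restriction to the induced subgraph $G-X$ is again a proper $(k-1)$-coloring. Fix such a coloring $\phi$ of $G-X$. Note that $Y\subseteq V(G)\setminus X$, so $Y$ is colored by $\phi$; let $C=\phi(Y)$ be the set of colors $\phi$ uses on $Y$. Because $\phi$ restricts to a proper coloring of $G[Y]$, we have $|C|\ge\chi(G[Y])\ge\chi(G[X])$, where the last inequality is exactly the second hypothesis.

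The key step is to re-color $X$ using only colors from $C$. Since $|C|\ge\chi(G[X])$, any optimal proper coloring of $G[X]$ can be relabeled so that it uses only colors lying in $C$; call the result $\psi\colon X\to C$. I then extend $\phi$ to all of $G$ by letting it agree with $\psi$ on $X$. This uses at most $k-1$ colors in total, so the only thing left is to verify that the combined coloring is proper. Edges inside $X$ and edges inside $V(G)\setminus X$ cause no problem by construction. The main obstacle — and the place where all three hypotheses are used together — is an edge $xw$ with $x\in X$ and $w\notin X$. Such a $w$ lies in $N(X)$; the anticomplete hypothesis guarantees $w\notin Y$, while the hypothesis that $Y$ is complete to $N(X)$ forces $w$ to be adjacent to every vertex of $Y$, so $\phi(w)\neq\phi(y)$ for all $y\in Y$, i.e.\ $\phi(w)\notin C$. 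Since the recolored value $\psi(x)$ lies in $C$, we get $\psi(x)\neq\phi(w)$, and the coloring is proper. This produces a proper $(k-1)$-coloring of $G$, the desired contradiction. The delicate point worth double-checking is precisely that $N(X)$ is disjoint from $Y$ and avoids every color of $C$; this is exactly what the anticomplete and completeness conditions are engineered to deliver.
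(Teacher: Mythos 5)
Your proof is correct, and it is the standard argument for this lemma: the paper itself gives no proof (it cites \cite{CGHS21}), and the proof in that reference proceeds exactly as yours does --- delete $X$, take a $(k-1)$-coloring of $G-X$, and recolor $X$ with colors appearing on $Y$, which stays proper because $Y$ being complete to $N(X)$ forces every color used on $Y$ to be absent from $N(X)$. Your verification of the key points (that $Y\cap N(X)=\emptyset$ follows from anticompleteness, and that $|\phi(Y)|\ge\chi(G[Y])\ge\chi(G[X])$ makes the relabeling possible) is exactly right.
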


%\begin{proof}
%Suppose that $G$ has a pair of nonempty subsets $X$ and $Y$ that satisfy all three conditions.
%Since $G$ is $k$-vertex-critical, $G-X$ has a $(k-1)$-coloring $\phi$.  Let $t=\chi(G[Y])$.
%Since $Y$ is complete to $N(X)$, at least $t$ colors do not appear on any vertex in $N(X)$ under $\phi$.
%So we can obtain a $(k-1)$-coloring of $G$ by coloring $G[X]$ with those $t$ colors. This contradicts
%that $G$ is $k$-chromatic.
%\end{proof}

A graph $G$ is {\em perfect} if $\chi(H)=\omega(H)$ for each induced subgraph $H$ of $G$.
An {\em imperfect} graph is a graph that is not perfect.
We conclude this section with the celebrated Strong Perfect Graph Theorem~\cite{CRST06}.

\begin{theorem}[Strong Perfect Graph Theorem~\cite{CRST06}]\label{thm:SPGT}
A graph is perfect if and only if it contains no odd holes or odd antiholes.
\end{theorem}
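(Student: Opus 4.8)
The plan is to prove the two implications separately; the forward direction is elementary, while the converse is the substantial Berge-graph characterization. For the forward direction (perfect implies no odd hole and no odd antihole), I would use heredity together with the fact that the forbidden configurations are themselves imperfect. Perfection is hereditary, so it suffices to observe that no perfect graph can contain an imperfect induced subgraph. An odd hole $C_{2k+1}$ with $2k+1\ge 5$ satisfies $\omega=2$ but $\chi=3$, and an odd antihole $\overline{C_{2k+1}}$ satisfies $\omega=\alpha(C_{2k+1})=k$ but $\chi=k+1$ (the clique cover number of $C_{2k+1}$). Both are therefore imperfect, and any graph containing one as an induced subgraph fails to be perfect, which is exactly the contrapositive of the forward implication.

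For the converse I would follow the decomposition strategy of Chudnovsky, Robertson, Seymour and Thomas. Call a graph with no odd hole and no odd antihole a \emph{Berge} graph; the goal is to show every Berge graph is perfect. First I would pass to a minimal counterexample: a Berge graph $G$ on the fewest vertices that is imperfect, so every proper induced subgraph of $G$ is perfect. The core is a structural dichotomy asserting that every Berge graph is either \emph{basic} --- bipartite, the line graph of a bipartite graph, a double split graph, or the complement of one of these --- or else admits one of a short list of structural decompositions: a $2$-join, a complement $2$-join, a homogeneous pair, or a balanced skew partition. Two reductions then close the argument. Each basic class is verified directly to be perfect, so $G$ is not basic. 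And each decomposition is shown to be incompatible with minimal imperfection: using the theory of partitionable graphs (a minimal imperfect graph on $n$ vertices has $n=\omega\alpha+1$ and contains no star cutset) together with Roussel--Rubio-type lemmas and the composition results for $2$-joins and homogeneous pairs, one proves that a minimal imperfect graph can admit none of these decompositions. Hence $G$ is neither basic nor decomposable, a contradiction, so no minimal imperfect Berge graph exists.

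The main obstacle --- indeed, essentially the entire difficulty --- is the structural dichotomy itself: proving that a Berge graph which is not basic must admit one of the allowed decompositions. This requires an exhaustive analysis organized around the presence or absence of certain induced configurations (long holes, prisms, pyramids, various wheels, and their stretched variants), and is far beyond what any short argument could reproduce; it is the content of the cited paper \cite{CRST06}. In the present work this theorem is needed only as a black box, to certify perfection of the graphs that arise in the structural analysis of $(P_5,\text{gem})$-free and $(P_5,\overline{P_3+P_2})$-free graphs, so I would invoke \autoref{thm:SPGT} directly rather than attempt to reprove it.
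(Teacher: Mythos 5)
Your proposal is correct and matches the paper's treatment: the paper states this result as a known theorem cited from~\cite{CRST06} and offers no proof of its own, using it purely as a black box. Your sketch of the forward direction and your decision to invoke the deep converse (the Chudnovsky--Robertson--Seymour--Thomas decomposition theorem) rather than reprove it is exactly the right call here.
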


%------------------------------------------------------------------------------------

\section{New Results}\label{sec:new}

\subsection{Gem-Free Graphs}

\begin{theorem}\label{thm:gem}
For every fixed integer $k\ge 1$, there are finitely many $k$-vertex-critical ($P_5$, gem)-free graphs.
\end{theorem}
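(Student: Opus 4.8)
The plan is to bound the number of vertices of any $k$-vertex-critical ($P_5$, gem)-free graph $G$ purely in terms of $k$. Since $\omega(G)\le\chi(G)=k$, the clique number is already controlled, so the whole difficulty is to stop $G$ from having large ``blown-up'' parts. First I would dispose of the perfect case: if $G$ is perfect then $\chi(G)=\omega(G)=k$, so $G$ contains $K_k$; as $K_k$ is itself $k$-chromatic and $G$ is $k$-vertex-critical, this forces $G=K_k$. Hence $G$ may henceforth be assumed imperfect, and by \autoref{thm:SPGT} it then contains an odd hole or odd antihole. A short check shows that every odd hole of length $\ge 7$ contains an induced $P_5$ and that $\overline{C_7}$ (and every longer odd antihole) contains either $P_5$ or a gem, so the only surviving obstruction is $C_5$; thus the imperfect case is exactly the case in which $G$ contains an induced $C_5$.

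Next I would invoke the known structure theorem for ($P_5$, gem)-free graphs, the one underlying the optimal $\chi$-binding function: a connected ($P_5$, gem)-free graph with no clique cutset is either perfect or a blow-up of $C_5$, i.e.\ it is obtained from $C_5$ by substituting graphs into its five vertices, the five parts being homogeneous sets. By \autoref{lem:clique cutsets} our $G$ has no clique cutset and is connected, so this theorem applies and reduces the problem to bounding the sizes of the five homogeneous parts.

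The heart of the argument is the claim, which I would isolate as a lemma, that in a connected gem-free $k$-vertex-critical graph every proper homogeneous set induces a clique. Let $H$ be such a set and $M=N(H)$ its set of outside neighbours, which is nonempty (as $G$ is connected and $H\ne V(G)$) and complete to $H$. Any $w\in M$ is complete to $H$, so were $H$ to contain an induced $P_4$, then $w$ together with that $P_4$ would form a gem; hence $G[H]$ is $P_4$-free, i.e.\ a cograph. If $G[H]$ were disconnected with components $C_1,\dots,C_m$, these would be pairwise anticomplete with common outside neighbourhood $M$, and choosing $C_i,C_j$ with $\chi(G[C_i])\le\chi(G[C_j])$ would contradict \autoref{lem:dominated subsets} (take $X=C_i$, $Y=C_j$, so that $Y$ is complete to $N(X)=M$). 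Thus $G[H]$ is a connected cograph, hence a join $F_1\ast\cdots\ast F_m$ of its co-components. Each $F_i$ is again a homogeneous set of $G$ (every outside vertex is complete or anticomplete to $H$, while $H\setminus F_i$ is complete to $F_i$ by the join), so the same dichotomy forces $G[F_i]$ to be connected; but a co-component is either a single vertex or a disconnected cograph, so each $F_i$ is a single vertex and $H=K_m$ is a clique.

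Applying this lemma to the five homogeneous parts of the $C_5$-blow-up, each part is a clique of size at most $\omega(G)\le k$, whence $|V(G)|\le 5k$; combined with $G=K_k$ in the perfect case this bounds $|V(G)|$ by a function of $k$ and gives the finiteness. The step I expect to be the main obstacle is the correct invocation of the structure theorem and its matching to the criticality lemmas: one must verify that after deleting clique cutsets the remaining atom really is a $C_5$-blow-up (rather than some other basic graph the theorem may permit), and that the parts it produces are genuine homogeneous sets to which \autoref{lem:dominated subsets} applies. The recursive collapse of the co-components to single vertices, though clean in outline, likewise relies on handling the cograph structure carefully.
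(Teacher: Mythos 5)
The fatal problem is the structure theorem you invoke in your second step: ``a connected ($P_5$, gem)-free graph with no clique cutset is either perfect or a blow-up of $C_5$.'' No such theorem exists, and the statement is false; a counterexample appears in this very paper. Consider the third graph in the appendix list of 4-vertex-critical ($P_5$,gem)-free graphs, with adjacency lists \{0: 1 4 5; 1: 0 2 5 6 7; 2: 1 3 7; 3: 2 4 6; 4: 0 3 5 6 7; 5: 0 1 4; 6: 1 3 4 8 9; 7: 1 2 4 8 9; 8: 6 7 9; 9: 6 7 8\}. It is connected, ($P_5$,gem)-free, has no clique cutset (being vertex-critical, by \autoref{lem:clique cutsets}), and is imperfect (it contains the induced $C_5$ on $0,1,2,3,4$ and, being 4-vertex-critical on 10 vertices, contains no $K_4$); yet it is not a blow-up of $C_5$: if it were, then by your own (correct) homogeneous-set lemma each of the five parts would be a clique, forcing independence number at most $2$, whereas $\{0,2,6\}$ is an independent set of size $3$. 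The correct structure theorem --- the one actually underlying the $\chi$-binding function, stated as \autoref{lem:p5gem} here --- is genuinely weaker: a connected ($P_5$,gem)-free graph containing an induced $C_5$ is either a $P_4$-free substitution of one of \emph{ten} special graphs of order at most $9$ (a case your argument handles, up to replacing your bound $5k$ by $9k$), or belongs to the class $\mathcal{H}$ of \autoref{fig:calH}, which has \emph{seven} parts: besides the blown-up $C_5$ on $A_1,\ldots,A_5$ there are sets $A_6$ and $A_7$ whose mutual adjacency is essentially unrestricted, $A_6$ is not a homogeneous set, and vertices of $A_7$ may be mixed on it.

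The rest of your proposal is sound: the perfect case, the reduction via \autoref{thm:SPGT} to the existence of an induced $C_5$, and your lemma that every proper homogeneous set of a connected gem-free $k$-vertex-critical graph is a clique (the cograph/co-component argument via \autoref{lem:dominated subsets} is a correct, self-contained substitute for the paper's appeal to \autoref{lem:clique homogeneous set}). But these steps only dispose of the substitution case and of the parts $A_1,\ldots,A_5$; the entire difficulty of the theorem, and the bulk of the paper's proof, lies in the case $G\in\mathcal{H}$, where $|A_6|$ and $|A_7|$ must be bounded without any homogeneity to exploit. The paper does this by an induction on clique number (\autoref{clm:induction}), showing that any $X\subseteq A_6$ that is homogeneous with respect to $V(G)-A_7$ satisfies $|X|\le 2^{2\omega^2(X)}$, hence $|A_6|\le 2^{2k^2}$, followed by a pigeonhole argument giving $|A_7|\le k2^{|A_6|}$. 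Your proposal contains no mechanism for this case, and correspondingly your claimed bound of $5k$ is far smaller than what the paper's argument yields (roughly $5k+2^{2k^2}+k\cdot 2^{2^{2k^2}}$). You flagged the invocation of the structure theorem as the likely obstacle, and it is exactly where the proof breaks.
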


To prove \autoref{thm:gem}, we need a structure theorem for the class of ($P_5$, gem)-free graphs that was obtained in
\cite{CKMM20}. Let $\mathcal{H}$ be the class of connected ($P_5$, gem)-free graphs $G$ such that $V(G)$ can be partitioned
into seven nonempty sets $A_1,A_2,\ldots,A_7$ such that:

$\bullet$ Each $A_i$ induces a $P_4$-free graph.

$\bullet$ $A_1$ is complete to $A_2\cup A_5\cup A_6$ and anticomplete to $A_3\cup A_4\cup A_7$.

$\bullet$ $A_3$ is complete to $A_2\cup A_4\cup A_6$ and anticomplete to $A_5\cup A_7$.

$\bullet$ $A_4$ is complete to $A_5\cup A_6$ and anticomplete to $A_2\cup A_7$.
 
$\bullet$ $A_2$ is anticomplete to $A_5\cup A_6\cup A_7$.

$\bullet$ $A_5$ is anticomplete to $A_6\cup A_7$.

$\bullet$ The vertex set  of each component of $G[A_7]$ is a homogeneous set of $G$.

$\bullet$ The adjacency between $A_6$ and $A_7$ is not specified but it is restricted by the fact that $G$ is ($P_5$, gem)-free.

See \autoref{fig:calH} for an illustration.
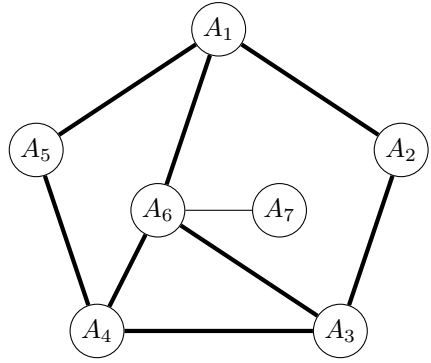
\begin{figure}[t]
\centering

\begin{tikzpicture}[scale=0.8]
\tikzstyle{vertex}=[draw, circle, fill=white!100, minimum width=4pt,inner sep=2pt]
%\draw[step=1cm,color=gray] (-6,-6) grid (6,6);

\node[vertex] (A1) at (0,2) {$A_1$};
\node[vertex] (A2) at (3,0) {$A_2$};
\node[vertex] (A5) at (-3,0) {$A_5$};
\node[vertex] (A3) at (2,-3) {$A_3$};
\node[vertex] (A4) at (-2,-3) {$A_4$};

\draw[ultra thick] (A1)--(A2)--(A3)--(A4)--(A5)--(A1);

\node[vertex] (A6) at (-1,-1) {$A_6$};
\node[vertex] (A7) at (1,-1) {$A_7$};
\draw[ultra thick] (A6)--(A1) (A6)--(A3) (A6)--(A4);
\draw (A6)--(A7);

\end{tikzpicture}

\caption{A diagram for graphs $G\in \mathcal{H}$. A thick line between two sets means that the two sets are complete.
A normal line between two sets means that the edges are arbitrary (subject to the ($P_5$,gem)-freeness). No line between two sets means that the two sets are anticomplete. Moreover, each $A_i$ is $P_4$-free, and the vertex set of each component of $G[A_7]$ is a homogeneous set of $G$.}\label{fig:calH}
\end{figure}

For two graphs $G$ and $H$, we say that $G$ is an {\em substitution} of $H$ if $V(G)$ can be partitioned into $|V(H)|$ nonempty
subsets $Q_v$, $v\in V(H)$ so that $Q_u$ and $Q_v$ are complete if $uv\in E(H)$ and anticomplete if $uv\notin E(H)$ for every
different vertices $u,v\in V(H)$. If each $Q_v$ is $P_4$-free, then it is called an {\em $P_4$-free substitution};
if each $Q_v$ is a clique, then it is called a {\em clique substitution}.
To state the structural theorem, we also need 10 special graphs $G_1,G_2,\ldots,G_{10}$ each of which has order at most 9.
We omit the drawing of these 10 graphs and refer the readers to~\cite{CKMM20}. Now we are ready to state the structure theorem
obtained in~\cite{CKMM20}.

\begin{lemma}[\cite{CKMM20}]\label{lem:p5gem}
Let $G$ be a connected ($P_5$, gem)-free graph that contains an induced $C_5$. Then either $G\in \mathcal{H}$ or $G$
is a $P_4$-free substitution of $G_1,G_2,\ldots,G_{10}$.
\end{lemma}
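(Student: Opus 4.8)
The plan is to fix an induced five-cycle $\Omega=v_1v_2v_3v_4v_5$ (indices read modulo $5$) in $G$ and to analyze the rest of the graph through the \emph{type} of each remaining vertex $u$, namely the set $N(u)\cap V(\Omega)$. The first step is to use the two forbidden graphs to cut the $2^5$ possible types down to a short list. Any four consecutive vertices of $\Omega$ induce a $P_4$, and every $4$-element subset of a $5$-cycle is a set of four consecutive vertices; hence a vertex adjacent to four or five vertices of $\Omega$ dominates an induced $P_4$ and creates a gem, so every type has size at most $3$. A vertex with a single neighbor $v_i$, together with the $P_4$ running the other way around $\Omega$, induces a $P_5$; the same routing rules out a pair of \emph{consecutive} neighbors. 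Thus each external vertex is anticomplete to $\Omega$, adjacent to a pair of vertices at distance two, or adjacent to a triple (of either shape, both of which one checks create no gem or $P_5$ against $\Omega$ alone). The key grouping observation is that a vertex of distance-two-pair type $\{v_{i-1},v_{i+1}\}$ and one of consecutive-triple type $\{v_{i-1},v_i,v_{i+1}\}$ differ only in their adjacency to the representative $v_i$, so both belong in the same class, the candidate $A_i$; the vertices anticomplete to $\Omega$ are the candidate $A_7$, and the surviving ``gap'' triples are the candidate $A_6$.

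The second step is to certify the adjacencies demanded by the definition of $\mathcal{H}$. For each pair of classes I would show, by exhibiting an induced $P_5$ or a gem whenever an edge is present or absent contrary to the asserted pattern, that the two classes are complete, anticomplete, or (only for $A_6$ and $A_7$) unconstrained; the cyclic symmetry of $\Omega$ collapses these into a handful of essentially different cases. In the same spirit one proves that each $A_i$ is $P_4$-free: an induced $P_4$ inside a single class, together with a vertex of $\Omega$ adjacent to all four of its vertices, is a gem, while a $P_4$ straddling the boundary can be completed to a $P_5$. The homogeneity requirement on the components of $G[A_7]$ is the analogous statement: a vertex mixed on such a component, combined with a path inside the component and its attachment to $\Omega$, again forces a forbidden induced subgraph.

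The genuinely hard part, and the reason the conclusion is a dichotomy rather than simply ``$G\in\mathcal{H}$'', is that this clean picture can degenerate: some candidate classes may be empty and force several $A_i$ to merge, two incompatible ``gap''-triple classes may coexist, or the anticomplete part may be re-attached in a way that is not homogeneous. The task is to show that every such degeneracy collapses $G$ onto a bounded skeleton. Here I would pass to the modular decomposition of $G$: the $P_4$-freeness arguments above, applied across modules (a $P_5$ or gem can otherwise be routed through two distinct modules together with $\Omega$), show that every maximal module is $P_4$-free, so $G$ is a $P_4$-free substitution of its prime quotient $Q$, and $Q$ is itself a prime $(P_5,\mathrm{gem})$-free graph containing an induced $C_5$. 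What remains is a finiteness statement for such prime graphs: one must prove that a prime $(P_5,\mathrm{gem})$-free graph on more than nine vertices already admits the $\mathcal{H}$-partition, so that only finitely many exceptional prime skeletons survive.

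Enumerating those exceptions and verifying that they are exactly $G_1,\dots,G_{10}$ is, I expect, the main obstacle. Bounding the order of a prime exception requires the most delicate and least symmetric use of $P_5$-freeness, since the usual routings that yielded clean complete/anticomplete relations are precisely what fail when the $A_i$-structure breaks down; and once a bound on the order is in hand, confirming that no prime graph other than the ten listed arises is most reliably done by an exhaustive search over small prime $(P_5,\mathrm{gem})$-free graphs. Thus the routine part of the argument is the type analysis and the cross-class adjacency casework, while the conceptual and computational weight of the theorem lies in separating the generic class $\mathcal{H}$ from the bounded substitution base and in pinning down that base exactly.
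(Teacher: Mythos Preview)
The paper does not prove this lemma: it is quoted verbatim from Chudnovsky, Karthick, Maceli and Maffray~\cite{CKMM20} and used as a black box in the proof of \autoref{thm:gem}. There is therefore nothing in the present paper to compare your proposal against.

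As for the proposal itself, your outline tracks the standard approach in~\cite{CKMM20}: type the remaining vertices by their trace on a fixed $C_5$, rule out the forbidden types, group the survivors into the candidate sets $A_1,\dots,A_7$, and verify the cross-adjacencies. One point to be careful about is your assignment of the ``gap'' triples: there are five rotational classes of such triples, while the definition of $\mathcal{H}$ fixes $A_6$ to be complete to $A_1,A_3,A_4$ specifically, so only one of the five classes can populate $A_6$. The presence of more than one such class (or the interaction with consecutive triples in $A_i$) is exactly what produces the bounded exceptional skeletons, and you correctly flag this as the delicate part. Your modular-decomposition reduction to a prime quotient is the right idea, but the actual identification of $G_1,\dots,G_{10}$ in~\cite{CKMM20} is done by a lengthy hand case analysis rather than a computer search, and the bound on the order of a prime exception is not obtained uniformly but emerges from that casework. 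So your sketch is a faithful high-level map, but the substance of the lemma lies entirely in the case analysis you defer, and that analysis is the content of the cited paper rather than of this one.
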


%Note that $G_1,G_2,\ldots,G_{10}$ each has order at most 9 and later we only need this fact. So we omit the drawing of these 10 graphs and refer the readers to~\cite{CKMM20}.

We also need a lemma from~\cite{CHM19}.

\begin{lemma}[\cite{CHM19}]\label{lem:clique homogeneous set}
Let $G$ be a connected ($P_5$, gem)-free graph and $H$ a homogeneous set of $G$ that is not a clique. 
Then there exists a connected induced subgraph $G'$ of G with
$|G'|<|G|$ such that $\omega(G')=\omega(G)$ and $\chi(G')=\chi(G)$.
\end{lemma}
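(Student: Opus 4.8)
The plan is to show that one can delete a single vertex of $H$ without changing either $\omega$ or $\chi$, so that $G'=G-v$ does the job. The engine behind this is the behaviour of $\omega$ and $\chi$ under \emph{substitution}. Since $H$ is a homogeneous set, every vertex of $V(G)\setminus H$ is either complete or anticomplete to $H$; write $A$ for the set of vertices complete to $H$ and $B$ for those anticomplete to it. A short argument then shows that $\omega(G)$ depends on $G[H]$ only through $\omega(G[H])$ (a maximum clique meeting $H$ does so in a clique of $G[H]$, and its remaining vertices form a clique inside $A$), and that $\chi(G)$ depends on $G[H]$ only through $\chi(G[H])$ (colour $G[H]$ optimally; since each vertex of $A$ sees all of $H$, it must avoid all colours used on $H$, so $G[H]$ behaves like a single vertex of weight $\chi(G[H])$). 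Consequently, if I can produce a nonempty $H'\subsetneq H$ with $\omega(G[H'])=\omega(G[H])$ and $\chi(G[H'])=\chi(G[H])$, then deleting $H\setminus H'$ preserves both parameters.

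The key step is to pin down the structure of $G[H]$, and this is where I expect the real difficulty to lie. First, since $G$ is connected and $H$ is a proper homogeneous set (if $H=V(G)$ the statement can fail, e.g.\ for $C_5$, so properness is essential here), there is an edge between $H$ and $V(G)\setminus H$; by homogeneity its outside endpoint $a$ is complete to all of $H$, so $A\neq\emptyset$. I then claim that $G[H]$ is $P_4$-free: an induced $P_4$ inside $H$ together with $a$, which is adjacent to each of its four vertices, would induce a gem in $G$, contradicting gem-freeness. Hence $G[H]$ is a cograph and in particular perfect, so $\chi(G[H])=\omega(G[H])$ and the same holds for every induced subgraph of $G[H]$. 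Isolating this cograph conclusion is the main obstacle; everything else is bookkeeping, but the reduction genuinely fails without it, which is precisely why the $C_5$ obstruction above is ruled out only once an outside complete vertex is available.

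Finally I would extract the vertex to delete. Because $H$ is not a clique, $G[H]$ is not complete, so $\omega(G[H])<|H|$ and some vertex $v\in H$ lies outside a fixed maximum clique of $G[H]$. Setting $H'=H\setminus\{v\}$ gives $\omega(G[H'])=\omega(G[H])$, and perfectness of $G[H]$ yields $\chi(G[H'])=\omega(G[H'])=\omega(G[H])=\chi(G[H])$. By the substitution remarks of the first paragraph, $G'=G-v$ satisfies $\omega(G')=\omega(G)$ and $\chi(G')=\chi(G)$, and plainly $|G'|<|G|$. It remains to verify that $G'$ is connected: any path of $G$ through $v$ can be rerouted, since the two neighbours of $v$ on that path lie in $A\cup H'$ and can be joined either directly or through a common neighbour, using $A\neq\emptyset$ or $H'\neq\emptyset$. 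This yields the desired $G'$ and completes the plan.
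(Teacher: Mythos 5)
Your proposal is correct, and it is essentially the standard proof of this lemma: the paper itself does not prove the statement but quotes it from \cite{CHM19}, where the argument is the same as yours---by connectivity and properness of $H$ there is an outside vertex complete to $H$, which makes $G[H]$ $P_4$-free and hence perfect, and the substitution structure shows that $\omega(G)$ and $\chi(G)$ depend on $G[H]$ only through $\omega(G[H])$ and $\chi(G[H])$, so $H$ can be shrunk to a maximum clique of $G[H]$. The only cosmetic differences are that \cite{CHM19} deletes all of $H\setminus C$ for a maximum clique $C$ of $G[H]$ in one step rather than one vertex at a time, and your explicit remark that $H=V(G)$ must be excluded (witnessed by $C_5$) is a sound clarification of the properness convention for homogeneous sets that the paper leaves implicit.
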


We are now ready to prove the main result in this subsection.

\begin{proof}[Proof of \autoref{thm:gem}]
Let $G$ be a $k$-vertex-critical ($P_5$,gem)-free graph. Then $G$ is connected. 
We show that $|G|$ is bounded by a function of $k$.
If $G$ contains a $K_k$, then $G=K_k$. So we may assume that $G$ is $K_k$-free.
Since $G$ is not perfect, it follows from \autoref{thm:SPGT} that $G$ contains an induced $C_5$.
By \autoref{lem:p5gem}, $G\in \mathcal{H}$ or $G$ is a $P_4$-expansion of $G_1,G_2,\ldots,G_{10}$.
If $G$ is a $P_4$-free substitution of $G_1,G_2,\ldots,G_{10}$, then it must be a clique expansion of 
$G_1,G_2,\ldots,G_{10}$ by \autoref{lem:clique homogeneous set}. Since $|G_i|\le 9$ for $1\le i\le 10$ (see~\cite{CKMM20}),
it follows that $|G|\le 9k$. 

Therefore, we assume that $G\in \mathcal{H}$ in the following and can be partitioned into seven nonempty
subsets $A_1,A_2,\ldots,A_7$ as described in the definition. Note that each $A_i$ ($1\le i\le 5$) is a homogeneous set of $G$.
Thus, $A_i$ (for $1\le i\le 5$) is a clique by \autoref{lem:clique homogeneous set} and so $|A_i|\le k$. 
So it remains to bound $A_6$ and $A_7$. For the same reason, every component of $G[A_7]$ is a clique and has size
at most $k$.

\begin{claim}\label{clm:induction}
Let $X\subseteq A_6$ be a subset such that $X$ is homogeneous with respect to $V(G)-A_7$. Then 
$|X|\le 2^{2\omega^2(X)}$.
\end{claim}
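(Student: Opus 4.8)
The plan is to bound $|X|$ by induction on $\omega(X)$, exploiting that $X$ is a cograph ($P_4$-free) together with two external constraints: one coming from $(P_5,\text{gem})$-freeness and one from vertex-criticality via \autoref{lem:dominated subsets}. First I would fix the interaction between $X$ and $A_7$. Since each component of $G[A_7]$ is a homogeneous set of $G$, every $x\in X\subseteq A_6$ is complete or anticomplete to each such component; let $f(x)$ be the set of components of $G[A_7]$ to which $x$ is complete. Because $X$ is homogeneous with respect to $V(G)-A_7$, any two $x,x'\in X$ have identical neighbors outside $X\cup A_7$, so they are distinguished only by their neighbors inside $X$ and by the sets $f(x),f(x')$. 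The pivotal gadget I would prove from $(P_5,\text{gem})$-freeness is: \emph{if $x,x'\in A_6$ are nonadjacent, then $f(x)$ and $f(x')$ are comparable under inclusion.} Indeed, otherwise pick components $C\in f(x)\setminus f(x')$ and $C'\in f(x')\setminus f(x)$, vertices $c\in C$, $c'\in C'$, and any $a\in A_1$ (which is nonempty, complete to $A_6$, and anticomplete to $A_7$); then $c\,x\,a\,x'\,c'$ is an induced $P_5$, a contradiction.

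Write $\omega=\omega(X)$. The induction proceeds along the cograph decomposition of $G[X]$, which is either a single vertex, a join of cographs, or a disjoint union of cographs; in the latter two cases each piece is again homogeneous with respect to $V(G)-A_7$ (vertices outside $X$ are unmixed on $X$, and vertices in one piece are either complete or anticomplete to each other piece), so the induction hypothesis applies. In the \emph{join} case $X=X_1\vee\cdots\vee X_p$ with $p\ge 2$ we have $\sum_i\omega(X_i)=\omega$, so each $\omega(X_i)\le\omega-1$ and $p\le\omega$; the induction hypothesis gives $|X_i|\le 2^{2(\omega-1)^2}$, whence $|X|\le\omega\cdot 2^{2(\omega-1)^2}\le 2^{2\omega^2}$. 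The base case $\omega=1$ (an independent set) is immediate from the gadget plus criticality: two nonadjacent vertices would have comparable $f$-values and no internal edges, making them comparable vertices, which is forbidden by the singleton case of \autoref{lem:dominated subsets}.

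The crux is the \emph{disjoint union} case $X=X_1\sqcup\cdots\sqcup X_m$, where $\omega(X_i)$ may equal $\omega$ so the clique number does not drop. Here each component is connected, hence a join (or a single vertex), and so is individually bounded by the join estimate above; the real task is to bound the \emph{number} of components $m$. I would do this through \autoref{lem:dominated subsets}, using the clean observation that a union-component has no neighbors inside $X$: for each $i$, the outside neighborhood $N(X_i)$ lies entirely in $V(G)-X$ and equals $E\cup\bigcup_{x\in X_i}f(x)$, where $E$ is the common outside-$A_7$ part shared by all of $X$. Consequently the hypotheses of \autoref{lem:dominated subsets} applied to the anticomplete pair $(X_i,X_j)$ reduce to the inclusion $\bigcup_{x\in X_i}f(x)\subseteq\bigcap_{x'\in X_j}f(x')$ together with $\chi(G[X_i])\le\chi(G[X_j])$. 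The plan is to combine this with the chain structure of $f$-values forced by the gadget (all cross-component pairs are nonadjacent, hence their $f$-values are pairwise comparable) and a pigeonhole over the at most $\omega$ possible values of $\chi(G[X_i])$ to conclude that only boundedly many (roughly $2^{O(\omega)}$) components can coexist without producing a forbidden dominated pair. Feeding a bound of this order for $m$ into the recursion $|X|\le m\cdot\big(\omega\cdot 2^{2(\omega-1)^2}\big)$ yields the claimed $|X|\le 2^{2\omega^2}$.

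The main obstacle I expect is precisely this last counting step in the union case: translating the pairwise comparability of $f$-values into a usable linear (laminar) ordering of the components, and extracting from it enough structure so that \autoref{lem:dominated subsets} rules out all but exponentially many components. The join case and the gadget are routine, but making the component count come out to the exact exponent $2\omega^2$ will require careful bookkeeping of how the per-level multiplicative factors ($\omega$ from each join and $2^{O(\omega)}$ from each union) accumulate through the recursion.
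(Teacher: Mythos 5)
Your gadget (comparability of the sets $f(x)$ for nonadjacent vertices of $A_6$), your base case, and your join case are all correct, and your reduction of the hypotheses of \autoref{lem:dominated subsets} for a pair of components to the inclusion $\bigcup_{x\in X_i}f(x)\subseteq\bigcap_{x'\in X_j}f(x')$ is also correct. But the disjoint-union case --- which you yourself flag as the main obstacle --- has a genuine gap, and it is not a matter of bookkeeping: the constraint system you propose (pairwise comparability of cross-component $f$-values, \autoref{lem:dominated subsets} applied to whole components, and pigeonhole on $\chi(G[X_i])$) does not bound the number of components at all. Concretely, suppose the $f$-values of all vertices of $X$ form a single chain $S_1\subsetneq S_2\subsetneq\cdots\subsetneq S_{2m}$, and component $X_i$ contains vertices with $f$-value $S_i$ and vertices with $f$-value $S_{2m+1-i}$, for $1\le i\le m$. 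Then all cross-component $f$-values are comparable, all components may have equal chromatic number, and for every ordered pair $(i,j)$ with $i\neq j$ one has $\bigcup_{x\in X_i}f(x)=S_{2m+1-i}\not\subseteq S_j=\bigcap_{x'\in X_j}f(x')$, so no pair of whole components is forbidden by \autoref{lem:dominated subsets}. Finer applications of that lemma do not rescue the plan: a proper subset of a component has neighbors inside its own component, and any other component is anticomplete to those, so the ``$Y$ complete to $N(X)$'' hypothesis can never be satisfied except by whole components. Hence $m$ is unbounded under your constraints, for every $\omega\ge 2$, and the recursion $|X|\le m\cdot\omega\cdot 2^{2(\omega-1)^2}$ has an uncontrolled factor.

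What is missing is a structural fact that exploits adjacency \emph{inside} the components, which your nonadjacent-pair gadget cannot see. The paper proves exactly this: all but at most one component of $X$ are homogeneous sets of $G$ (equivalently, at most one component has a vertex of $A_7$ mixed on it, i.e.\ two vertices with distinct $f$-values). The proof takes two components each containing an edge on which some vertex of $A_7$ is mixed and derives an induced $P_5$ by a case analysis on whether the two mixing vertices lie in the same component of $G[A_7]$. With this in hand, every component except possibly one is homogeneous, hence a clique by \autoref{lem:clique homogeneous set} (a lemma your proposal never invokes, and which is essential here), so it has size at most $\omega$; moreover \autoref{lem:dominated subsets} together with a $P_5$-argument shows there is at most one homogeneous clique component of each size, so the homogeneous components contribute at most $\omega^2$ vertices in total. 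The single possibly non-homogeneous component is then bounded by partitioning it according to neighborhoods in a maximum clique, which plays the role of your join estimate. To close the union case you would need to prove this ``all but one component is homogeneous'' statement (or an equivalent substitute); without it, the exponential accumulation you worry about in your last paragraph is not the issue --- the issue is that no finite bound follows at all.
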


\begin{proof}
We prove this by induction on $\omega(X)$. Since $A_3\neq \emptyset$, it contains a vertex, say $t$.
The base case is $\omega(X)=1$, i.e., $X$ is an independent set.
If $X$ contains two vertices $x_1,x_2$, then by \autoref{lem:dominated subsets} and our assumption
there exist vertices $y_1,y_2\in A_7$
such that $x_1y_1,x_2y_2\in E$ but $x_1y_2,x_2y_1\notin E$. Since every component of $G[A_7]$ is a homogeneous set of $G$,
$y_1y_2\notin E$. Then $y_1,x_1,t,x_2,y_2$ induces a $P_5$.  This proves that $|X|\le 1$.

Suppose now that $\omega:=\omega(X)\ge 2$ and the claim is true for any subset $Y$ such that $Y$ is homogeneous with respect to $V(G)-A_7$ with $\omega(Y)<\omega$. We first prove two useful facts about components of $X$.

\begin{equation} \label{eqn_homogeneous components}
\text{All but at most one component of $X$ are homogeneous sets of $G$.}
\end{equation}

\noindent {\em Proof of (\ref{eqn_homogeneous components}).}
Suppose not. Then there exist two components $C$ and $C'$ of $X$ such that $C$ and $C'$ are not homogeneous sets of $G$.
By our assumption on $X$, there are vertices $v,v'\in A_7$ such that $v$ is mixed on an edge $ab\in C$ with $va\notin E$ and $vb\in E$, and $v'$ is mixed on an edge $a'b'\in C'$ with $v'a'\notin E$ and $v'b'\in E$.

If $v,v'$ are in the same component of $A_7$, then $vv'\in E$. Since every component of $A_7$ is a homogeneous set of $G$,
$vb',v'b\in E$ and $va',v'a\notin E$. Then $a,b,v,b',a'$ induces a $P_5$. 

So $v,v'$ lie in different components of $A_7$. If $vb',v'b\notin E$, then $v,b,t,b',v'$ induces a $P_5$. 
So by symmetry we assume that $vb'\in E$. Then $va'\in E$ since otherwise $a,b,v,b',a'$ induces a $P_5$.
Note that $a,b,v,b',v'$ induces a $P_5$ unless $v'$ is adjacent to $a$ or $b$. If $v'b\in E$, then $v'a\in E$
since otherwise $a,b,v',b',a'$ induces a $P_5$. This shows that $v'a\in E$.
But now $va'$ and $v'a$ induce a $2P_2$ and this together with $t\in A_3$ gives an induced $P_5$.
This completes the proof of (\ref{eqn_homogeneous components}).

\begin{equation} \label{eqn_2}
\text{There is at most one homogeneous component of $X$ of size $i$ for $1\le i\le \omega$.}
\end{equation}

\noindent {\em Proof of (\ref{eqn_2}).} 
Suppose not. Then there are two homogeneous components $C$ and $C'$ of $X$ of size $i$ for some $1\le i\le k$.
By \autoref{lem:clique homogeneous set}, we know that $C$ and $C'$ are two cliques of size $i$, and thus $\chi(G[C])=\chi(G[C'])=i$.
By \autoref{lem:dominated subsets}, there exist $a,a'\in A_7$ such that $a\in N(C)\setminus N(C')$ and 
$a'\in N(C')\setminus N(C)$. Then $a,c,t,c',a'$ induces a $P_5$, where $c\in C$, $c'\in C'$.
This completes the proof of (\ref{eqn_2}).

\medskip 
Let $H$ be the possible non-homogeneous component of $X$. Then $|X|\le \omega^2+|H|$. We now bound $H$.
Let $K=\{v_1,v_2,\ldots,v_s\}$ be a maximum clique of $H$. Clearly, $s\le \omega(X)$.
For every $I\subseteq K$, define
\[
	T_I=\{v\in V(H)\setminus V(K):N_K(v)=I\}.
\]
We show that $T_I$ is homogeneous with respect to $V(G)-A_7$. First note that $T_{K}=\emptyset$ by the choice of $K$.
Suppose that $T_{\emptyset}\neq \emptyset$. Then by the connectivity of $H$, there is a vertex $z\in T_{\emptyset}$
that has a neighbor $y\in T_I$ for some $I\neq \emptyset$. Since $y$ cannot be complete to $K$, 
there exist $i,j$ such that $y$ is adjacent to $v_i$ but not to $v_j$. Then $z,y,v_i,v_j$ induces a $P_4$, which contradicts
the fact that $A_6$ is $P_4$-free. So $T_{\emptyset}= \emptyset$.

Let $I,J\subseteq K$ with $I\neq J$. If there exist $i,j$ such that $v_i\in I\setminus J$ and $v_j\in J\setminus I$,
then $T_I$ and $T_J$ are complete: if $t_i\in T_I$ and $t_j\in T_j$ are not adjacent, then $t_i,v_i,v_j,t_j$ induces a $P_4$.
Now suppose that $I\subseteq J$. Since $|I|<|J|$, there exists an index $j$ such that $v_j\in J\setminus I$. 
Since $J\neq K$, there is an index $k$ such that $v_k\notin J$. Then if $t_i\in T_I$ and $t_j\in T_J$ are adjacent,
then $t_i,t_j,v_j,v_k$ induces a $P_4$. So $T_I$ and $T_J$ are anticomplete.

This proves that for each $I\subseteq K$, $T_I$ is homogeneous with respect to $V(G)-A_7$. 
Moreover, $\omega(T_I)<\omega$. By the inductive hypothesis, it follows that $|T_I|\le 2^{2(\omega-1)^2}$. 
So
$$|H|=\sum_{I\subseteq K}|T_I|+|K|\le 2^{2(\omega-1)^2}2^{\omega}+\omega.$$

Thus, $|X|\le 2^{2(\omega-1)^2}2^{\omega}+\omega+\omega^2=2^{2\omega^2-3\omega+2}+(\omega+\omega^2)$.
Since $\omega+\omega^2\le 2^{3\omega-2}$, it follows that $|X|\le 2^{2\omega^2}$.
%Sovling the recurrence relation $f(t)\le f(t-1)2^t+t+t^2$, we have $f(t)\le 2^{O(t^2)}$.
This proves the claim.
\end{proof}

Applying \autoref{clm:induction} to $A_6$, we have that $|A_6|\le 2^{2\omega^2(A_6)}\le 2^{2k^2}$. Finally, we bound $A_7$.

\begin{claim}
$|A_7|\le k2^{|A_6|}$.
\end{claim}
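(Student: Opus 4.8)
The plan is to exploit the fact that, within the structure of $\mathcal{H}$, the set $A_7$ attaches to the rest of $G$ only through $A_6$. First I would record, directly from the adjacency relations defining $\mathcal{H}$, that $A_7$ is anticomplete to $A_1\cup A_2\cup A_3\cup A_4\cup A_5$; since moreover distinct components of $G[A_7]$ are anticomplete to each other, the entire external neighborhood of any component $C$ of $G[A_7]$ is contained in $A_6$. Because the vertex set of each component of $G[A_7]$ is a homogeneous set of $G$, no vertex of $A_6$ is mixed on $C$, so the attachment of $C$ to the rest of $G$ is captured exactly by the subset $S_C := N(C)\cap A_6 \subseteq A_6$, and every vertex of $S_C$ is in fact complete to $C$.

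The main step, and the one I expect to carry the proof, is to show that the map $C \mapsto S_C$ from components of $G[A_7]$ to subsets of $A_6$ is injective. Suppose for contradiction that two distinct components $C$ and $C'$ satisfy $S_C = S_{C'} =: S$. Relabel so that $\chi(G[C]) \le \chi(G[C'])$ and set $X = C$, $Y = C'$. These are nonempty disjoint sets that are anticomplete to each other (they lie in different components of $G[A_7]$), and by the previous paragraph $N(X) = S$. Since $S = S_{C'} \subseteq N(C') = N(Y)$ and $Y$ is a homogeneous set, $Y$ is complete to $S = N(X)$. Thus $X$ and $Y$ satisfy all three hypotheses of \autoref{lem:dominated subsets}, contradicting the $k$-vertex-criticality of $G$. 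Hence $C \mapsto S_C$ is injective.

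Finally, injectivity gives at most $2^{|A_6|}$ components of $G[A_7]$. As established earlier in the proof, each such component is a clique of size at most $k$, so summing over all components yields $|A_7| \le k\cdot 2^{|A_6|}$, as claimed. The only delicate point is the bookkeeping in the first paragraph: one must read off from the definition of $\mathcal{H}$ that every external neighbor of a component of $A_7$ lies in $A_6$, and then use the homogeneity of the component to upgrade ``adjacent to $C$'' into ``complete to $C$'' so that the completeness hypothesis of \autoref{lem:dominated subsets} is actually met; the chromatic comparability hypothesis is handled for free by relabeling $C$ and $C'$.
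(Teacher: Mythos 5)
Your proof is correct and rests on the same two ingredients as the paper's: the fact (established just before the claim) that each component of $G[A_7]$ is a clique of size at most $k$ whose external neighborhood lies entirely in $A_6$ and is complete to it by homogeneity, and \autoref{lem:dominated subsets} applied to two components with the same neighborhood. The only difference is organizational, but it is worth noting. The paper first groups the components by their size $i$, so that two components in the same group are cliques of equal chromatic number, and then runs the pigeonhole argument within each of the $k$ groups; you instead satisfy the chromatic-number hypothesis of \autoref{lem:dominated subsets} by relabeling ($\chi(G[C])\le\chi(G[C'])$ without loss of generality), which makes the map $C\mapsto N(C)$ injective over \emph{all} components at once. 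Your variant is in fact slightly tighter and matches the stated bound exactly: it gives at most $2^{|A_6|}$ components in total, each of size at most $k$, hence $|A_7|\le k2^{|A_6|}$; the paper's per-size count, read literally, allows up to $2^{|A_6|}$ components of each size $i\in\{1,\dots,k\}$ and therefore only yields $|A_7|\le \binom{k+1}{2}2^{|A_6|}$ --- a constant-factor discrepancy that is immaterial for the finiteness result, but your bookkeeping is the cleaner of the two.
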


\begin{proof}
Recall that each component of $A_7$ is a clique and so has size at most $k$.
We now group the components by size and show that each group has at most $2^{|A_6|}$ components.
Suppose not. Then there are two components $C$ and $C'$ of $A_7$ having the same size $i$ with
$N(C)=N(C')$, which contradicts \autoref{lem:dominated subsets}. This proves the claim. 
\end{proof}

Therefore, $|G|=\sum_{i=1}^7|A_i|\le 5k+|A_6|+|A_7|$, which is a function of $k$.
\end{proof}

\subsection{$\overline{P_3+P_2}$-Free Graphs}

\begin{theorem}\label{thm:diamond}
For every fixed integer $k\ge 1$, there are finitely many $k$-vertex-critical ($P_5$, $\overline{P_3+P_2}$)-free graphs.
\end{theorem}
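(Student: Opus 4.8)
The plan is to show that every $k$-vertex-critical $(P_5,\overline{P_3+P_2})$-free graph $G$ has order bounded by a function of $k$. I first dispose of the perfect case: a vertex-critical perfect graph satisfies $\chi(G)=\omega(G)$, so such a $G$ must be $K_k$. Hence I may assume $G$ is imperfect with $\omega(G)\le k-1$, and by \autoref{thm:SPGT} it contains an induced odd hole or odd antihole. Since $G$ is $P_5$-free it has no induced $C_t$ for $t\ge 6$, which leaves $C_5$ as the only possible odd hole; and since $C_{2t+1}$ contains an induced $P_3+P_2$ for every $2t+1\ge 7$, every odd antihole $\overline{C_{2t+1}}$ with $2t+1\ge 7$ contains an induced $\overline{P_3+P_2}$ and is thus excluded, while $\overline{C_5}=C_5$. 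Therefore $G$ contains an induced $C_5$, say $C=v_1v_2v_3v_4v_5$, and this five-cycle is the structure on which the whole argument is built.

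I would then fix $C$ and partition $V(G)\setminus V(C)$ according to the trace $N(v)\cap V(C)$. A short argument using $P_5$-freeness shows that no outside vertex is adjacent to exactly one vertex of $C$ or to exactly two consecutive vertices of $C$ (in either case four consecutive cycle vertices extend to an induced $P_5$). Consequently only a constant number of traces survive: the empty trace, the five nonconsecutive pairs $\{v_{i-1},v_{i+1}\}$, the five consecutive triples, the five ``edge-plus-isolated'' triples, the five traces of size four, and the full trace. The next task is to use $\overline{P_3+P_2}$-freeness to prune this list and, more importantly, to control how the surviving vertices attach to one another. The key local obstruction is that $\overline{P_3+P_2}$ consists of two nonadjacent vertices whose three common neighbors induce an edge together with an isolated vertex; so whenever two nonadjacent vertices acquire such a common neighborhood (for instance two of the $v_i$ together with outside vertices, or two outside vertices together with vertices of $C$) a forbidden $\overline{P_3+P_2}$ appears. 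Exploiting this repeatedly, the goal is to show that the vertices with a large trace on $C$, together with $C$, form a core of bounded size, and that every remaining vertex attaches to this core in one of only boundedly many patterns.

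With the types in hand, I would bound the number of vertices of each type by a function of $k$ using the criticality lemmas. The clique bound $\omega(G)\le k-1$ guarantees that every clique has fewer than $k$ vertices, and \autoref{lem:dominated subsets} forbids two nonadjacent vertices (more generally two disjoint sets behaving like modules) with comparable neighborhoods. These two facts drive a pigeonhole argument: once it is known that the vertices of a fixed type can attach to the rest of $G$ in only boundedly many ways, any type containing too many vertices would produce two of them with comparable neighborhoods, contradicting \autoref{lem:dominated subsets}. The vertices with empty trace need separate treatment; using connectivity together with $P_5$-freeness one shows they cannot lie far from $C$ (a shortest path from such a vertex to $C$ would otherwise yield an induced $P_5$), and \autoref{lem:clique cutsets} rules out the degenerate configurations in which they would otherwise be unbounded. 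Summing the bounds over the constantly many types then gives $|V(G)|\le f(k)$.

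The step I expect to be the main obstacle is the middle one: forbidding $\overline{P_3+P_2}$ does not eliminate any single trace in isolation, since each surviving trace is locally compatible with a $C_5$. The contradictions must therefore be extracted from the simultaneous attachment of several outside vertices to $C$ and to each other, which is where essentially all of the case analysis lives, and the delicate point is to make every pigeonhole bound depend only on $k$ so that the number of attachment patterns---and hence the size of each type and of the anticomplete part---is uniformly bounded.
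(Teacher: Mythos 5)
Your setup matches the paper's proof exactly: dispose of the perfect case to get $G=K_k$, use \autoref{thm:SPGT} together with the observations that long odd holes contain $P_5$ and long odd antiholes contain $\overline{P_3+P_2}$ to find an induced $C_5$, classify the traces of outside vertices on that five-cycle (empty, nonconsecutive pairs, consecutive triples, edge-plus-vertex triples, quadruples, full), and then bound each class by combining $\omega(G)<k$, \autoref{lem:clique cutsets}, \autoref{lem:dominated subsets} and the pigeonhole principle. The treatment you sketch for the empty-trace vertices is also sound in spirit: the paper shows their neighborhoods land in the bounded core formed by the size-$\ge 3$ traces other than the consecutive triples, and uses the no-clique-cutset lemma to make them independent.

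The genuine gap is the step you yourself flag as the main obstacle, and it cannot be waved through: your pigeonhole scheme needs every class to attach to the rest of $G$ in boundedly many patterns, but this is false for the nonconsecutive-pair classes $C_i$. A vertex of $C_1$ may attach arbitrarily to $C_3\cup C_4$, which are themselves the (a priori unbounded) classes being bounded; hence two vertices $c_1,c_2\in C_1$ that agree on the bounded core can still be incomparable via witnesses $d_1\in N(c_1)\setminus N(c_2)$ and $d_2\in N(c_2)\setminus N(c_1)$ lying in $C_3\cup C_4$, and \autoref{lem:dominated subsets} then gives no contradiction. If both witnesses lie in the same class $C_j$ one finds an induced $P_5$ directly, but in the mixed configuration $d_1\in C_4$, $d_2\in C_3$ no single forbidden-subgraph check suffices. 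The paper closes this hole with an idea absent from your outline: the five-cycle $Q$ is chosen \emph{from the start} to maximize the number $|Y|$ of edge-plus-vertex trace vertices, and in the mixed configuration one replaces $v_1$ by $c_1$ to obtain a new induced five-cycle $Q'$ whose corresponding set $Y'$ would exceed $|Y|$, contradicting the extremal choice after a short case analysis. Without this extremal choice and cycle re-selection device (or some substitute), the bound on $|C_i|$ --- and with it the whole finiteness argument --- does not go through; note that the analogous issue for the consecutive-triple classes $T_i$ is genuinely easier, since there a single incomparability witness already yields an induced $P_5$.
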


\begin{proof}
Let $G$ be a $k$-vertex-critical ($P_5$, $\overline{P_3+P_2}$)-free graph. We show that $|G|$ is bounded by a function of $k$.
If $G$ contains a $K_k$, then $G$ is isomorphic to $K_k$ and thus $|G|=k$.
So assume that $G$ is $K_k$-free. Since $G$ is imperfect, $G$ contains an induced $C_5$ by \autoref{thm:SPGT}.
Let $Q=v_1,v_2,v_3,v_4,v_5$ be an induced $C_5$. For each $1\le i\le 5$, we define
\begin{equation*} \label{eq1}
\begin{split}
Z & = \{v\in V\setminus Q: N_{Q}(v)=\emptyset\}, \\
C_i  & = \{v\in V\setminus Q: N_{Q}(v)=\{v_{i-1},v_{i+1}\}\}, \\
Y_i  & = \{v\in V\setminus Q: N_{Q}(v)=\{v_{i-2}, v_i, v_{i+2}\}\}, \\
T_i  & = \{v\in V\setminus Q: N_{Q}(v)=\{v_{i-1}, v_i, v_{i+1}\}\}, \\
F_i  & = \{v\in V\setminus Q: N_{Q}(v)=V(Q)\setminus \{v_i\}\}, \\
U & = \{v\in V\setminus Q: N_{Q}(v)=V(Q)\}.\\
\end{split}
\end{equation*}
Let $C=\cup_{1\le i\le 5}C_i$, $Y=\cup_{1\le i\le 5}Y_i$, $T=\cup_{1\le i\le 5}T_i$ and $F=\cup_{1\le i\le 5}F_i$. 
We choose $Q$ so that $|Y|$ is maximum.
%It follows immediately from the $P_5$-freeness of $G$ that $V(G)=V(Q)\cup Z\cup C\cup Y\cup T\cup F\cup U$.

\begin{claim}\label{clm:partition}
$V(G)=V(Q)\cup Z\cup C\cup Y\cup T\cup F\cup U$.
\end{claim}

\begin{proof}
Suppose that the claim is false. Then there is a vertex $u$ such that $\{v_i\}\subseteq N(u)\subseteq \{v_i,v_{i+1}\}$ for some 
$1\le i\le 5$. Then $u,v_i,v_{i+4},v_{i+3},v_{i+2}$ induces a $P_5$.
\end{proof}

We first bound $U$.

\begin{claim}\label{clm:U}
For each $1\le i\le 5$, $Y_i\cup U$ and $F_i\cup U$ are cliques.
\end{claim}

\begin{proof}
Suppose that $Y_i \cup U$ contains two nonadjacent vertices $x$ and $y$, then $\{x,y,v_{i-2},v_i,v_{i+2}\}$ induces a $\overline{P_3+P_2}$. The proof for $F_i\cup U$ is completely analogous.
\end{proof}

By \autoref{clm:U}, it follows that $|U|<k$.

\begin{claim}\label{clm:Yi}
For each $1\le i\le 5$, $C_i$, $Y_i$ and $F_i$ are independent sets.
\end{claim}

\begin{proof}
Note that for each vertex $x\in C\cup Y\cup F$, there is an index $i$ such that $x$ is not adjacent to $v_i$ but adjacent to
both $v_{i+1}$ and $v_{i-1}$. If $C_i$ contains two adjacent vertices $x$ and $y$, then let $j$ be such that
$x,y$ are not adjacent to $v_j$ but adjacent to both $v_{j-1}$ and $v_{j+1}$. Then $\{x,y,v_{j-1},v_j,v_{j+1}\}$ induces a $\overline{P_3+P_2}$.
The proof for $Y_i$ and $F_i$ is completely analogous.
\end{proof}

By \autoref{clm:U} and \autoref{clm:Yi}, $|Y_i|\le 1$ and $|F_i|\le 1$. So $|Y|\le 5$ and $|F|\le 5$.
Next we bound $Z$.

\begin{claim}\label{clm:ZTC}
$Z$ is anticomplete to $C\cup T$.
\end{claim}

\begin{proof}
Let $z\in Z$. If $z$ has a neighbor $x\in C_i\cup T_i$, then $z,x,v_{i+1},v_{i+2},v_{i+3}$ induces a $P_5$.
\end{proof}

\begin{claim}\label{clm:ZY}
Each vertex in $Y\cup F$ is either complete or anticomplete to each component of $Z$.
\end{claim}

\begin{proof}
Let $v\in Y\cup F$ be that $v$ is adjacent to $a$ but not adjacent to $b$ with $ab\in E(Z)$. 
Note that there is an index $i$ such that $v,v_i,v_{i+1}$ is an induced $P_3$.
Then $b,a,v,v_i,v_{i+1}$ induces a $P_5$. 
\end{proof}

\begin{claim}
$Z$ is an independent set.
\end{claim}

\begin{proof}
Suppose not. Let $K$ be a component of $Z$ containing an edge $ab$. By \autoref{clm:ZTC}, $N(K)\subseteq Y\cup F\cup U$.
Since $G$ has no clique cutset, it follows from \autoref{clm:ZY} and \autoref{clm:U} that there are two nonadjacent
vertices $x$ and $y$ in $Y\cup F$ that are complete to $K$. Let $v_i\in V(Q)$ be a common neighbor of $x$ and $y$.
Then $\{a,b,x,y,v_i\}$ induces a $\overline{P_3+P_2}$.
\end{proof}

By \autoref{clm:ZTC}, $N(z)\subseteq Y\cup F\cup U$ for each $z\in Z$. Note that $|Y\cup F\cup U|\le k+10$ and so
there are $2^{10+k}$ possible neighborhoods for vertices in $Z$.
By \autoref{lem:dominated subsets} and the pigeonhole principle, $|Z|\le 2^{10+k}$.

\medskip 
Next we bound $C$.

\begin{claim}\label{clm:CiCi+1}
$C_i$ and $C_{i+1}$ are complete to each other.
\end{claim}

\begin{proof}
Let $c_3\in C_3$ and $c_4\in C_4$. If $c_3c_4\notin E$, then $c_4,v_5,v_1,v_2,c_3$ induces a $P_5$.
\end{proof}

\begin{claim}\label{clm:CiTi}
$C_i$ is complete to $T_i$.
\end{claim}

\begin{proof}
By symmetry, we prove for $i=1$. If $c_1\in C_1$ and $t_1\in T_1$ are not adjacent, then $\{c_1,t_1,v_5,v_1,v_2\}$ induces a $\overline{P_3+P_2}$.
\end{proof}

\begin{claim}\label{clm:CiTi+1}
$C_i$ is anticomplete to $T_{i+1}$ and $T_{i-1}$.
\end{claim}

\begin{proof}
If $c_1\in C_1$ is adjacent to $t_2\in T_2$, then $\{v_5,c_1,v_1,v_2,t_2\}$ induces a $\overline{P_3+P_2}$. 
The case for $T_5$ is similar.
\end{proof}

\begin{claim}\label{clm:CiTi+2}
No vertex in $T_{i+2}$ or $T_{i-2}$ can be mixed on $C_i$.
\end{claim}

\begin{proof}
Suppose that $t_3\in T_3$ is mixed on $C_1$. Then there are vertices $c_1,c'_1\in C_1$ with $t_3c_1\in E$ and $t_3c'_1\notin E$. 
Then $c'_1,v_5,c_1,t_3,v_3$ induces a $P_5$.
The case for $T_4$ is similar.
\end{proof}

\begin{claim}\label{clm:boundCi}
For each $1\le i\le 5$, $|C_i|\le 2^{|Y\cup F\cup U|}$.
\end{claim}

\begin{proof}
Suppose not. Then by the pigeonhole principle, there are two vertices $c_1,c_2\in C_1$ such that $c_1$ and $c_2$ have the same neighbors in $Y\cup F\cup U$. By \autoref{clm:ZTC} and \autoref{clm:CiCi+1}-\autoref{clm:CiTi+2}, it follows that $c_1$ and $c_2$ have the
same neighbors in $Z\cup T\cup C_1\cup C_2\cup C_5$. Since $c_1$ and $c_2$ are not comparable, there exist $d_1\in N(c_1)\setminus N(c_2)$
and $d_2\in N(c_2)\setminus N(c_1)$. So $d_1,d_2\in C_3\cup C_4$.
If $d_1,d_2\in C_3$, then $d_1,c_1,v_5,c_2,d_2$ induces a $P_5$.
So we may assume that $d_1\in C_4$ and $d_2\in C_3$.

Consider the induced five-cycle $Q'=Q-\{v_1\}\cup \{c_1\}$.
Let $Y'$ be the set of vertices in $V(G)\setminus V(Q')$ whose neighborhood on $Q'$ induces a $P_2+P_1$. 
Note that $d_1\in Y'\setminus Y$. By the choice of $Q$, there exists a vertex $y\in Y\setminus Y'$.
Clearly, $y\in Y_1\cup Y_3\cup Y_4$ and $yc_1\notin E$. If $y\in Y_1$, then $c_1,v_5,v_1,y,v_3$ induces a $P_5$.
So $y\in Y_3\cup Y_4$. By symmetry, we may assume that $y\in Y_4$. 
Note that $yc_2\notin E$ since otherwise $\{y,c_2,v_1,v_2,v_5\}$ induces a $\overline{P_3+P_2}$, $yd_1\in E$ since otherwise $y,v_1,v_5,d_1,v_3$ induces a $P_5$, and that
$yd_2\notin E$ since otherwise $\{y,d_1,d_2,v_2,v_3\}$ induces a $\overline{P_3+P_2}$.
But now $c_2,d_2,d_1,y,v_1$ induces a $P_5$, a contradiction.
\end{proof}

By \autoref{clm:boundCi}, $|C|\le 5\times 2^{k+10}$. Finally, we bound $T$.

\begin{claim}\label{clm:Ti}
For each $1\le i\le 5$, $T_i$ is $P_2+P_1$-free and thus induces a complete multipartite graph.
\end{claim}

\begin{proof}
If $T_i$ contains three vertices $x,y,z$ such that $xy$ and $z$ induce a disjoint union of an edge and a vertex,
then $\{x,y,z,v_{i-1},v_{i+1}\}$ induces a $\overline{P_3+P_2}$. It is well-known that a graph is a
complete multipartite graph if and only if it is $P_2+P_1$-free.
\end{proof}

\begin{claim}\label{clm:TiTi+1}
For each $1\le i\le 5$, $T_i$ and $T_{i+1}$ are complete.
\end{claim}

\begin{proof}
If $t_1\in T_1$ and $t_2\in T_2$ are not adjacent, then $t_2,v_3,v_4,v_5,t_1$ induces a $P_5$.
\end{proof}

\begin{claim}\label{clm:boundT}
$|T_i|\le k2^{|Z\cup C\cup Y\cup F\cup U|}$.
\end{claim}

\begin{proof}
By \autoref{clm:Ti}, $T_1$ is a complete $r$-partite graph $(X_1,\ldots,X_r)$ for some $r\le k$ where $X_i$ and $X_j$ are complete for all $1\le i<j\le r$. If $|X_i|>2^{|Z\cup C\cup Y\cup F\cup U|}$, then by the pigeonhole principle there are two nonadjacent vertices $x_1$ and $x_2$ that have the same neighbors in  $Z\cup C\cup Y\cup F\cup U$. Since $x_1$ and $x_2$ are not comparable, there are vertices $y_1\in N(x_1)\setminus N(x_2)$ and $y_2\in N(x_2)\setminus N(x_1)$. By \autoref{clm:TiTi+1}, $y_1,y_2\in T_3\cup T_4$. Then $v_4,y_1,x_1,v_1,x_2$
induces a $P_5$.
\end{proof}

By \autoref{clm:boundT}, $|T|$ is bounded by a function of $k$ and so is $|G|$. This completes the proof.
\end{proof}

\subsection{Complete Characterization for $k=4$ and $k=5$}

We remark that the upper bound on the order of the $k$-vertex-critical graphs in \autoref{thm:diamond} and  \autoref{thm:gem} 
is far from being tight: the objective was to show the {\em existence} of such a bound and so we did not try to optimize the bound. This indicates that it may be possible to characterize all $k$-vertex-critical graphs for small values of $k$. In this subsection, we prove that this is indeed possible for $k=4$ and $k=5$. In particular, we prove the following theorems.

\begin{theorem}\label{thm:k=4}
There are exactly 3 4-vertex-critical ($P_5$,gem)-free graphs and there are exactly 6 4-vertex-critical $(P_5,\overline{P_3+P_2})$-free graphs.
\end{theorem}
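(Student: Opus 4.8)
The plan is to settle this exact-count statement computationally, leveraging the finiteness already established in \autoref{thm:gem} and \autoref{thm:diamond}. Those theorems guarantee an explicit (if very large) upper bound on the order of any $4$-vertex-critical graph in either class, so in each case the set to be characterized is finite and, in principle, fully enumerable; the remaining task is to generate this finite set exhaustively and read off its members. Concretely, I would specialize the exhaustive generation algorithm of \cite{GS18} for $k$-vertex-critical $H$-free graphs to the two-obstruction setting, running it once with forbidden set $\{P_5,\mathrm{gem}\}$ and once with $\{P_5,\overline{P_3+P_2}\}$, both with $k=4$.

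The engine is an orderly construction that builds $(P_5,H)$-free graphs one vertex at a time, attaching each new vertex with every possible adjacency pattern to the vertices already present, discarding any partial graph that contains an induced $P_5$ or $H$, and using canonical-form isomorphism rejection to avoid generating the same graph twice. Whenever the current graph is $4$-chromatic, the algorithm tests $4$-vertex-criticality (that is, $\chi(G-v)=3$ for every vertex $v$) and records the graph if the test succeeds. The decisive point for correctness is that all pruning is done only via conditions that are \emph{necessary} for $4$-vertex-criticality: $H$-freeness, minimum degree at least $k-1$, absence of clique cutsets (\autoref{lem:clique cutsets}), and absence of comparable vertices and, more generally, of the dominated subsets ruled out by \autoref{lem:dominated subsets}. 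Since no $4$-vertex-critical $(P_5,H)$-free graph can violate any of these, none is ever discarded, and the search is exhaustive up to the order bound supplied by the finiteness theorems. Once the (small) output lists are produced, I would independently certify each listed graph as $4$-vertex-critical and $(P_5,H)$-free by a separate colorability check, and confirm that the counts are exactly $3$ and $6$.

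The main obstacle is feasibility rather than correctness: the order bounds coming out of \autoref{thm:gem} and \autoref{thm:diamond} are astronomically large (for instance $|A_6|\le 2^{2k^2}$), so a search that prunes only by the generic invariants above would never terminate in practice. The real work is to supply much sharper, class-specific pruning that keeps the explored search tree small while remaining provably complete. For the gem-free class this is where the structure theorem \autoref{lem:p5gem} and \autoref{lem:clique homogeneous set} enter: for $k=4$ they force the sets $A_1,\dots,A_5$ and each component of $A_7$ to be cliques of size at most $4$ and sharply constrain $A_6$, so the candidate graphs in $\mathcal{H}$ (and the clique substitutions of $G_1,\dots,G_{10}$) can be enumerated directly; for the $\overline{P_3+P_2}$-free class the claims proved inside \autoref{thm:diamond} play the analogous role. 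A secondary obstacle is guaranteeing, in a machine-checkable way, that every problem-specific pruning rule preserves the set of $4$-vertex-critical graphs, which is essential for the enumeration to constitute a proof rather than merely strong evidence; I would address this by implementing only pruning rules backed by the lemmas above and by cross-checking the final lists against independently generated small critical graphs.
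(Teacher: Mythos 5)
Your proposal follows the same broad route as the paper---running the generation algorithm of \cite{GS18} on the inputs $k=4$ with $\mathcal{H}=\{P_5,\text{gem}\}$, resp.\ $\mathcal{H}=\{P_5,\overline{P_3+P_2}\}$---but your correctness architecture differs from the paper's in a way that creates two genuine gaps. First, your ``decisive point for correctness'' is unsound as stated: you prune partial graphs that violate conditions (minimum degree at least $k-1$, no clique cutset, no comparable vertices, no dominated subsets as in \autoref{lem:dominated subsets}) which are necessary for the \emph{final} $4$-vertex-critical graph but are not hereditary; they can and do fail in proper induced subgraphs of a critical graph. In a vertex-by-vertex generation, every target graph must remain reachable through a chain of unpruned intermediate graphs, and only the hereditary conditions ($P_5$-freeness, $H$-freeness) guarantee this automatically. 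Pruning intermediate graphs on the non-hereditary conditions may discard a graph whose extensions include a critical graph, so ``no critical graph violates these conditions'' does not imply ``no critical graph is ever lost.'' Making such pruning sound is precisely the nontrivial content of the algorithm of \cite{GS18}; the paper does not reprove it but invokes the established guarantee of \cite{GS18} that \emph{if the algorithm terminates, its output is the complete list} of $k$-vertex-critical $\mathcal{H}$-free graphs.

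Second, you anchor exhaustiveness to the explicit order bounds from \autoref{thm:gem} and \autoref{thm:diamond}, correctly observe that a search up to such astronomical bounds could never be run, and then propose to restore feasibility via class-specific pruning extracted from \autoref{lem:p5gem}, \autoref{lem:clique homogeneous set} and the claims inside the proof of \autoref{thm:diamond}---a substantial body of new arguments whose completeness-preservation you would still have to prove, and which you only sketch. The paper needs none of this: the guarantee of \cite{GS18} is completeness \emph{upon termination}, with no a priori order bound required (termination itself certifies finiteness), and in all four cases the extended program terminates within a few seconds. The finiteness theorems play no role in the paper's proof of \autoref{thm:k=4} and \autoref{thm:k=5}; instead, the paper supplements the runs with independent sanity checks (filtering the 12 known $4$-vertex-critical $P_5$-free graphs from \cite{BHS09,MM12}, and comparing gem-free graph counts against the \textit{On-Line Encyclopedia of Integer Sequences}). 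As written, your plan is not an executable proof: the bounded naive search cannot be carried out, and the replacement pruning scheme that would make it tractable is left unjustified.
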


\begin{theorem}\label{thm:k=5}
There are exactly 7 5-vertex-critical ($P_5$,gem)-free graphs and there are exactly 20 5-vertex-critical $(P_5,\overline{P_3+P_2})$-free graphs.
\end{theorem}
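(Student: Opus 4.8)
The plan is to establish both counts computationally, leaning on the finiteness guaranteed by \autoref{thm:gem} and \autoref{thm:diamond} together with an exhaustive generation procedure. First I would instantiate \autoref{thm:gem} and \autoref{thm:diamond} with $k=5$: they tell us that the class of $5$-vertex-critical ($P_5$,gem)-free graphs and the class of $5$-vertex-critical $(P_5,\overline{P_3+P_2})$-free graphs are both finite. This is the ingredient that makes any exhaustive search terminate. Note, however, that the explicit bounds emerging from those proofs (for instance the $2^{2k^2}$ bound on $|A_6|$, or the iterated exponential bound on $|A_7|$) are far too large to permit a naive enumeration of all graphs up to that order, so the search must be organized much more cleverly than ``list everything small enough''.

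Next I would extend the generation algorithm of \cite{GS18}, which, given a finite family $\mathcal{H}$ and an integer $k$, exhaustively produces all $k$-vertex-critical $\mathcal{H}$-free graphs. The algorithm constructs candidate graphs incrementally and prunes aggressively: any partially built graph that already contains an induced $P_5$, gem, or $\overline{P_3+P_2}$ is discarded at once, and isomorphic copies are rejected via a canonical form. Because both forbidden pairs are highly restrictive, and because $5$-vertex-criticality rules out clique cutsets (\autoref{lem:clique cutsets}) and dominated subsets (\autoref{lem:dominated subsets}), the live portion of the search tree stays small and the procedure terminates quickly in practice, with the finiteness above guaranteeing that it terminates at all.

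For each graph output by the generator I would then run an independent verification, separate from the generation code, checking directly that (i) $\chi(G)=5$, (ii) $\chi(G-v)\le 4$ for every $v\in V(G)$, so that $G$ is genuinely $5$-vertex-critical, and (iii) $G$ is ($P_5$,gem)-free, respectively $(P_5,\overline{P_3+P_2})$-free. Counting the surviving graphs yields the claimed totals of $7$ and $20$. As consistency checks I would confirm that $K_5$ appears as the unique complete member, that rerunning the procedure for $k=4$ reproduces the counts of \autoref{thm:k=4}, and ideally that a second, independently written implementation returns the same lists.

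The main obstacle is completeness rather than soundness: certifying that each \emph{reported} graph is $5$-vertex-critical and $\mathcal{H}$-free is routine, whereas the real content is the guarantee that \emph{no} $5$-vertex-critical $\mathcal{H}$-free graph has been overlooked. This rests on the exhaustiveness of the generation scheme of \cite{GS18}, namely that every such graph is reachable by its incremental construction, combined with the a priori finiteness from \autoref{thm:gem} and \autoref{thm:diamond}, which is precisely what licenses halting once the construction tree is exhausted. Making this search feasible in the face of the astronomically large theoretical bounds is the secondary practical difficulty, and it is handled by the structural pruning described above together with efficient isomorphism rejection.
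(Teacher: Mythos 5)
Your proposal takes essentially the same approach as the paper: the authors likewise extend the generation algorithm of~\cite{GS18} to handle gem and $\overline{P_3+P_2}$, run it for $k\in\{4,5\}$, count the resulting graphs, and perform independent correctness checks (comparison against the known 12 4-vertex-critical $P_5$-free graphs and against known counts of gem-free graphs). The only minor difference is that the paper does not derive termination from \autoref{thm:gem} and \autoref{thm:diamond}; it relies on the property that the algorithm's output is complete whenever it halts (and it halts within seconds), whereas you invoke the finiteness theorems to license termination --- a presentational nuance, not a gap.
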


\begin{proof}[Proof of \autoref{thm:k=4} and \autoref{thm:k=5}]
The proof uses the computer program for generating $k$-vertex-critical $\mathcal{H}$-free graphs developed in~\cite{GS18}. The program takes a positive
integer $k$ and a set $\mathcal{H}$ of forbidden induced subgraphs as input and if it terminates, it outputs a complete list of 
$k$-vertex-critical $\mathcal{H}$-free graphs. The source code of this algorithm can be downloaded from~\cite{criticalpfree-site}.  We refer to~\cite{GS18} for more details about the generation algorithm.
The correctness of this algorithm has extensively been tested in the literature, see~\cite{CGHS21,CGSZ20, GS18} for example.

We extended the algorithm from~\cite{GS18} so it can also generate $H$-free graphs where $H= $ gem or $H=\overline{P_3+P_2}$ and executed the algorithm in with the following inputs:
\begin{itemize}
\item $k=4$ or $k=5$;
\item $\mathcal{H}=\{P_5$,gem$\}$ or $\mathcal{H}=\{P_5,\overline{P_3+P_2}\}$.
\end{itemize}

In each of these 4 cases the algorithm terminates within a few seconds and yields the graphs reported in \autoref{thm:k=4}, \autoref{thm:k=5}, and in \autoref{table:counts_vertex_critical_graphs}.
\end{proof}

The counts of the vertex-critical graphs from \autoref{thm:k=4} and \autoref{thm:k=5} are listed in \autoref{table:counts_vertex_critical_graphs}.
The 3 4-vertex-critical ($P_5$,gem)-free graphs from \autoref{thm:k=4} are shown in \autoref{fig:4vcP5gem} and the 7 5-vertex-critical ($P_5$,gem)-free graphs from \autoref{thm:k=5} are shown in \autoref{fig:5vcP5gem}.

The adjacency lists of all vertex-critical graphs from \autoref{thm:k=4} and \autoref{thm:k=5} can be found in the Appendix and these graphs also be inspected in the database of interesting graphs at the \textit{House of Graphs}~\cite{hog} by searching for the keywords ``4-critical P5Gem-free'', ``5-critical P5Gem-free'', ``4-critical P5co(P2+P3)-free'', or ``5-critical P5co(P2+P3)-free''.

Note that it follows from~\cite{BHS09,MM12} that there are exactly 12 4-vertex-critical $P_5$-free graphs. As an independent test for the correctness of our implementation, we took these 12 graphs and tested which of these graphs is also gem-free or $\overline{P_3+P_2}$-free. This yielded exactly the same graphs as in \autoref{thm:k=4}. As another correctness test, we modified our program to generate all gem-free graphs and compared it with the known counts of gem-free graphs at the \textit{On-Line Encyclopedia of Integer Sequences}\footnote{\url{https://oeis.org/A079576}}. Also here the results were in complete agreement.

\begin{table}[ht!]
\centering
\small
\begin{tabular}{| l || c | c | c | c | c | c | c | c || c |}
\hline 
Vertices 												& 4		& 5		&	6		&	7	& 	8	&	9	&	10	& 13 	& Total		\\ \hline
4-vertex-critical ($P_5$,gem)-free graphs 				& 1		& 		& 			& 1		& 		& 		& 1		& 		& 3			\\
5-vertex-critical ($P_5$,gem)-free graphs 				& 		& 1		& 			& 		& 		& 3		& 		& 3		& 7			\\
4-vertex-critical ($P_5,\overline{P_3+P_2}$)-free graphs 	& 1		& 		& 1			& 4		& 		& 		& 		& 		& 6			\\
5-vertex-critical ($P_5,\overline{P_3+P_2}$)-free graphs 	& 		& 1		& 			& 1		& 	4	& 14		& 		& 		& 20			\\
\hline 
\end{tabular}
\caption{Counts of all $k$-vertex-critical $(P_5,H)$-free graphs for $k \in \{4,5\}$ and $H \in \{$gem,$ \overline{P_3+P_2}\}$.}

\label{table:counts_vertex_critical_graphs}
\end{table}

\begin{figure}[tb]
\centering
\begin{subfigure}{.5\textwidth}
\centering
\begin{tikzpicture}[scale=0.4]
\tikzstyle{vertex}=[draw, circle, fill=white!100, minimum width=4pt,inner sep=1pt]
%\draw[step=1cm,color=gray] (-5,-5) grid (5,5);

\node[vertex] (A2) at (2,0) {};
\node[vertex] (A5) at (-2,0) {};
\node[vertex] (A3) at (2,-3) {};
\node[vertex] (A4) at (-2,-3) {};

\draw (A2)--(A3)--(A4)--(A5)--(A2);
\draw (A2)--(A4) (A3)--(A5);

\end{tikzpicture}
\end{subfigure}%

\begin{subfigure}{.5\textwidth}
\centering
\begin{tikzpicture}[scale=0.4]
\tikzstyle{vertex}=[draw, circle, fill=white!100, minimum width=4pt,inner sep=1pt]
%\draw[step=1cm,color=gray] (-5,-5) grid (5,5);

\node[vertex] (A1) at (0,2) {};
\node[vertex] (A2) at (3,0) {};
\node[vertex] (A5) at (-3,0) {};
\node[vertex] (A3) at (2,-3) {};
\node[vertex] (A4) at (-2,-3) {};

\draw (A1)--(A2)--(A3)--(A4)--(A5)--(A1);

\node[vertex] (A6) at (-1,-1) {};
\node[vertex] (A7) at (1,-1) {};
\draw (A6)--(A1) (A6)--(A4) (A6)--(A5) (A7)--(A1) (A7)--(A2) (A7)--(A3);
\end{tikzpicture}
\end{subfigure}%
\begin{subfigure}{.5\textwidth}
\centering
\begin{tikzpicture}[scale=0.4]
\tikzstyle{vertex}=[draw, circle, fill=white!100, minimum width=4pt,inner sep=1pt]
%\draw[step=1cm,color=gray] (-5,-5) grid (5,5);

\node[vertex] (1) at (-2,0) {};
\node[vertex] (2) at (0,0) {};
\node[vertex] (3) at (2,0) {};
\node[vertex] (4) at (4,0) {};

\node[vertex] (5) at (-2,-2) {};
\node[vertex] (6) at (0,-2) {};
\node[vertex] (7) at (2,-2) {};
\node[vertex] (8) at (4,-2) {};

\draw (1)--(2)--(3)--(4) (5)--(6)--(7)--(8);
\draw (1)--(5) (4)--(8);
\draw (1)--(6) (2)--(7) (3)--(8);
\draw (2)--(5) (3)--(6) (4)--(7);

\node[vertex] (9) at (1,-4) {};
\draw (9)--(6) (9)--(7);

\node[vertex] (10) at (1,2) {};
\draw (10)--(2) (10)--(3);
\draw (10)--(9);
\end{tikzpicture}
\end{subfigure}%
\caption{All 3 4-vertex-critical ($P_5$,gem)-free graphs.} %All of these graphs are also 4-critical ($P_5$,gem)-free.}
\label{fig:4vcP5gem}
\end{figure}

\begin{figure}[tb]
\centering
% K5
\begin{subfigure}{.5\textwidth}
\centering
\begin{tikzpicture}[scale=0.4]
\tikzstyle{vertex}=[draw, circle, fill=white!100, minimum width=4pt,inner sep=1pt]
%\draw[step=1cm,color=gray] (-5,-5) grid (5,5);

\node[vertex] (A1) at (0,2) {};
\node[vertex] (A2) at (3,0) {};
\node[vertex] (A5) at (-3,0) {};
\node[vertex] (A3) at (2,-3) {};
\node[vertex] (A4) at (-2,-3) {};

\draw (A1)--(A2)--(A3)--(A4)--(A5)--(A1);
\draw (A1)--(A3)--(A5)--(A2)--(A4)--(A1);

\end{tikzpicture}

\end{subfigure}

% Hajos construction of two copies of K5
\begin{subfigure}{.5\textwidth}
\centering
\begin{tikzpicture}[scale=0.4]
\tikzstyle{vertex}=[draw, circle, fill=white!100, minimum width=4pt,inner sep=1pt]
%\draw[step=1cm,color=gray] (-5,-5) grid (5,5);

\node[vertex] (2) at (0,0) {};
\node[vertex] (1) at (-2,0) {};
\node[vertex] (3) at (0,-2) {};
\node[vertex] (4) at (-2,-2) {};

\draw (1)--(2)--(3)--(4)--(1);
\draw (1)--(3) (2)--(4);

\node[vertex] (6) at (4,0) {};
\node[vertex] (5) at (2,0) {};
\node[vertex] (7) at (4,-2) {};
\node[vertex] (8) at (2,-2) {};

\draw (5)--(6)--(7)--(8)--(5);
\draw (5)--(7) (6)--(8);

\node[vertex] (9) at (1,2) {};

\draw (9)--(1) (9)--(2) (9)--(4) (9)--(5) (9)--(6) (9)--(7);
\draw (3)--(8);

\end{tikzpicture}

\end{subfigure}%
% G_{2,2}
\begin{subfigure}{.5\textwidth}
\centering
\begin{tikzpicture}[scale=0.4]
\tikzstyle{vertex}=[draw, circle, fill=white!100, minimum width=4pt,inner sep=1pt]
%\draw[step=1cm,color=gray] (-5,-5) grid (5,5);

\node[vertex] (1) at (0,0) {};
\node[vertex] (2) at (-1,1) {};
\node[vertex] (3) at (1,1) {};
\node[vertex] (4) at (-3,-1) {};
\node[vertex] (5) at (3,-1) {};

\draw (1)--(2)--(3)--(1);
\draw (4)--(1) (4)--(2) (4)--(3) (5)--(1) (5)--(2) (5)--(3);

\node[vertex] (6) at (-1,-2) {};
\node[vertex] (7) at (1,-2) {};
\node[vertex] (8) at (-1,-4) {};
\node[vertex] (9) at (1,-4) {};

\draw (6)--(7)--(8)--(9)--(6);
\draw (6)--(8) (7)--(9);

\draw (6)--(4)--(8);
\draw (7)--(5)--(9);

\end{tikzpicture}
\end{subfigure}%

\begin{subfigure}{.5\textwidth}
\centering
\begin{tikzpicture}[scale=0.4]
\tikzstyle{vertex}=[draw, circle, fill=white!100, minimum width=4pt,inner sep=1pt]
%\draw[step=1cm,color=gray] (-5,-5) grid (5,5);

\node[vertex] (1) at (-2,0) {};
\node[vertex] (2) at (0,0) {};
\node[vertex] (3) at (2,0) {};
\node[vertex] (4) at (4,0) {};

\node[vertex] (5) at (-2,-2) {};
\node[vertex] (6) at (0,-2) {};
\node[vertex] (7) at (2,-2) {};
\node[vertex] (8) at (4,-2) {};

\draw (1)--(2)--(3)--(4) (5)--(6)--(7)--(8);
\draw (1)--(5) (2)--(6) (3)--(7) (4)--(8);
\draw (1)--(6) (2)--(7) (3)--(8);
\draw (2)--(5) (3)--(6) (4)--(7);

\node[vertex] (9) at (1,-4) {};
\draw (9)--(1) (9)--(5) (9)--(4) (9)--(8);
%\node at (0,-4) {Hajos construction of two copies of $K_5$.};

%\node at (0,-6) {$F$.};

\end{tikzpicture}
%\caption{Hajos construction of two copies of $K_5$.}
\end{subfigure}%
\begin{subfigure}{.5\textwidth}
\centering
\begin{tikzpicture}[scale=0.4]
\tikzstyle{vertex}=[draw, circle, fill=white!100, minimum width=4pt,inner sep=1pt]
%\draw[step=1cm,color=gray] (-5,-5) grid (5,5);

\node[vertex] (1) at (-2,0) {};
\node[vertex] (2) at (0,0) {};
\node[vertex] (3) at (2,0) {};
\node[vertex] (4) at (4,0) {};

\node[vertex] (5) at (-2,-2) {};
\node[vertex] (6) at (0,-2) {};
\node[vertex] (7) at (2,-2) {};
\node[vertex] (8) at (4,-2) {};

\draw (1)--(2)--(3)--(4) (5)--(6)--(7)--(8);
\draw (1)--(5) (2)--(6)  (4)--(8);
\draw (1)--(6) (2)--(7) (3)--(8);
\draw (2)--(5) (3)--(6) (4)--(7);

\node[vertex] (9) at (1,2) {};
\draw (9)--(1) (9)--(5) (9)--(4) (9)--(8) (9)--(7);

\node[vertex] (10) at (-1,-4) {};
\node[vertex] (11) at (1,-5.5) {};
\node[vertex] (12) at (-1,-6) {};
\node[vertex] (13) at (3,-6) {};
\draw (11)--(12)--(13)--(11);
\draw (10)--(11) (10)--(12) (10)--(13) (7)--(11) (7)--(12) (7)--(13);

\draw (10)--(2) (10)--(3) (10)--(6) (10)--(9);

%\node at (0,-9) {$K_5+F$.};

\end{tikzpicture}
%\caption{Hajos construction of two copies of $K_5$.}
\end{subfigure}

% Third row
\begin{subfigure}{.5\textwidth}
\centering
\begin{tikzpicture}[scale=0.4]
\tikzstyle{vertex}=[draw, circle, fill=white!100, minimum width=4pt,inner sep=1pt]
%\draw[step=1cm,color=gray] (-5,-5) grid (5,5);

\node[vertex] (1) at (0,0) {};
\node[vertex] (2) at (-1,1) {};
\node[vertex] (3) at (1,1) {};
\node[vertex] (4) at (-3,-1) {};
\node[vertex] (5) at (3,-1) {};

\draw (1)--(2)--(3)--(1);
\draw (4)--(1) (4)--(2) (4)--(3) (5)--(1) (5)--(2) (5)--(3);

\node[vertex] (6) at (0,-5) {};
\node[vertex] (7) at (-1,-6) {};
\node[vertex] (8) at (1,-6) {};
\node[vertex] (9) at (-3,-4) {};
\node[vertex] (10) at (3,-4) {};

\draw (6)--(7)--(8)--(6);
\draw (9)--(6) (9)--(7) (9)--(8) (10)--(6) (10)--(7) (10)--(8);

\draw (4)--(9) (5)--(10);
\draw (4)--(10) (5)--(9);
\node[vertex] (11) at (-1,-1) {};
\node[vertex] (12) at (-1,-4) {};
\node[vertex] (13) at (1,-2.5) {};
\draw (11)--(12)--(13)--(11);

\draw (4)--(11)--(9) (4)--(12)--(9);
\draw (5)--(13)--(10); 
%\node at (0,-4) {Hajos construction of two copies of $K_5$.};

%\node at (0,-6) {$G_{2,2}$.};

\end{tikzpicture}
%\caption{Hajos construction of two copies of $K_5$.}
\end{subfigure}%
% Fourth row
\begin{subfigure}{.5\textwidth}
\centering
\begin{tikzpicture}[scale=0.4]
\tikzstyle{vertex}=[draw, circle, fill=white!100, minimum width=4pt,inner sep=1pt]
%\draw[step=1cm,color=gray] (-5,-5) grid (5,5);

\node[vertex] (1) at (-1,1) {};
\node[vertex] (6) at (1,1) {};
\node[vertex] (5) at (-3,-1) {};
\node[vertex] (0) at (-1,-1) {};
\node[vertex] (2) at (1,-1) {};
\node[vertex] (12) at (3,-1) {};

\draw (1)--(6)--(5)--(0)--(1);
\draw (1)--(5)(6)--(0);

\draw (1)--(6)--(2)--(12)--(1);
\draw (1)--(2)(6)--(12);

\node[vertex] (9) at (0,-4) {};
\node[vertex] (10) at (-1,-6) {};
\node[vertex] (11) at (1,-6) {};
\node[vertex] (7) at (-3,-4) {};
\node[vertex] (8) at (3,-4) {};

\draw (9)--(10)--(11)--(9);
\draw (7)--(9) (7)--(10) (7)--(11) (8)--(9) (8)--(10) (8)--(11);

\node[vertex] (4) at (-4,-3) {};
\node[vertex] (3) at (4,-3) {};

\draw (7)--(4)--(3)--(8);

\draw (4)--(7)--(5)--(0)--(4);
\draw (4)--(5)(7)--(0);

\draw (3)--(8)--(2)--(12)--(3);
\draw (3)--(2)(8)--(12);

%\node at (0,-4) {Hajos construction of two copies of $K_5$.};

%\node at (0,-6) {$G_{2,2}$.};

\end{tikzpicture}
\end{subfigure}%
\caption{All 7 5-vertex-critical ($P_5$,gem)-free graphs.} %All of these graphs are also 5-critical ($P_5$,gem)-free.}
\label{fig:5vcP5gem}
\end{figure}
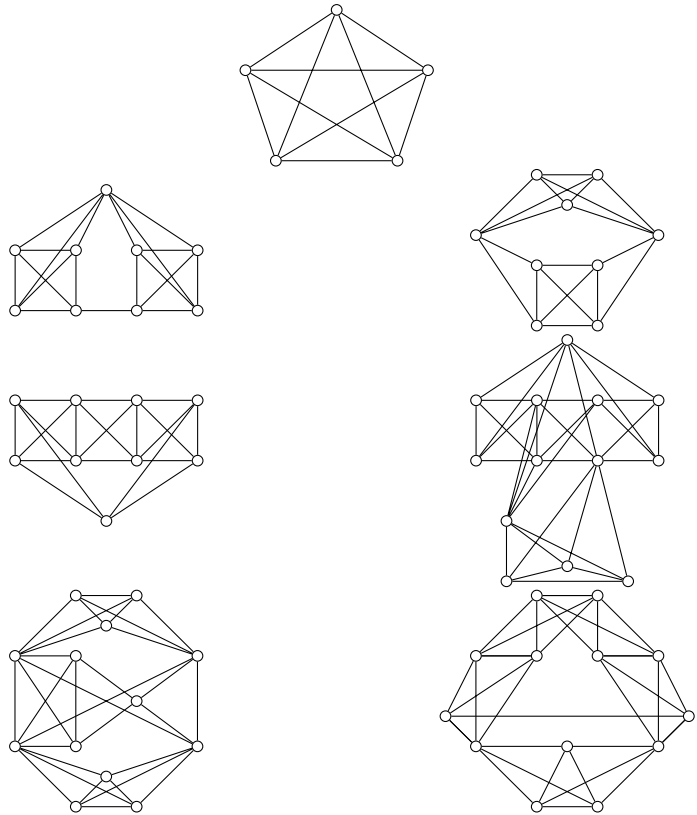

%------------------------------------------------------------------------------------

\section{Conclusion}\label{sec:classification}

In this paper we gave an affirmative answer to the problem posed in~\cite{CGHS21} for $H\in \{$gem$,\overline{P_3+P_2}\}$.
Moreover, we characterized all 4- and 5-vertex-critical ($P_5$,gem)-free graphs
and $(P_5,\overline{P_3+P_2})$-free graphs by extending the computer generation algorithm from~\cite{GS18}.
As future research it is natural to investigate the finiteness of $k$-vertex-critical graphs in the class of $(P_5,H)$-free graphs
for other graphs $H$ of order~5.

%------------------------------------------------------------------------------------

%\bibliographystyle{plain}

%------------------------------------------------------------------------------------

\section*{Appendix}\label{sec:appendix}

Below we list the adjacency lists of all vertex-critical graphs from \autoref{thm:k=4} and \autoref{thm:k=5}. These graphs can also be inspected in the database of interesting graphs at the \textit{House of Graphs}~\cite{hog} by searching for the keywords ``4-critical P5Gem-free'', ``5-critical P5Gem-free'', ``4-critical P5co(P2+P3)-free'', and ``5-critical P5co(P2+P3)-free''.

\subsection*{Adjacency lists of 4-vertex-critical ($P_5$,gem)-free graphs}

\begin{enumerate}
  \item \{0: 1 2 3; 1: 0 2 3; 2: 0 1 3; 3: 0 1 2\}
  \item \{0: 1 4 5; 1: 0 2 5 6; 2: 1 3 6; 3: 2 4 6; 4: 0 3 5; 5: 0 1 4; 6: 1 2 3\}
  \item \{0: 1 4 5; 1: 0 2 5 6 7; 2: 1 3 7; 3: 2 4 6; 4: 0 3 5 6 7; 5: 0 1 4; 6: 1 3 4 8 9; 7: 1 2 4 8 9; 8: 6 7 9; 9: 6 7 8\}
\end{enumerate}

\subsection*{Adjacency lists of 5-vertex-critical ($P_5$,gem)-free graphs}

\begin{enumerate}
  \item \{0: 1 2 3 4; 1: 0 2 3 4; 2: 0 1 3 4; 3: 0 1 2 4; 4: 0 1 2 3\}
  \item \{0: 1 4 5 6; 1: 0 2 5 6 7 8; 2: 1 3 7 8; 3: 2 4 7 8; 4: 0 3 5 6; 5: 0 1 4 6; 6: 0 1 4 5; 7: 1 2 3 8; 8: 1 2 3 7\}
  \item \{0: 1 4 5 6; 1: 0 2 5 6; 2: 1 3 6 7 8; 3: 2 4 7 8; 4: 0 3 5 7 8; 5: 0 1 4 6; 6: 0 1 2 5; 7: 2 3 4 8; 8: 2 3 4 7\}
  \item \{0: 1 4 5 6 7; 1: 0 2 5 6 8; 2: 1 3 6 8; 3: 2 4 7 8; 4: 0 3 5 7; 5: 0 1 4 6 7; 6: 0 1 2 5 8; 7: 0 3 4 5; 8: 1 2 3 6\}
  \item \{0: 1 4 5 6; 1: 0 2 5 6 7 8 12; 2: 1 3 7 12; 3: 2 4 6 7 8 12; 4: 0 3 5 8; 5: 0 1 4 6; 6: 0 1 3 5 9 10 11; 7: 1 2 3 12; 8: 1 3 4 9 10 11; 9: 6 8 10 11; 10: 6 8 9 11; 11: 6 8 9 10; 12: 1 2 3 7\}
  \item \{0: 1 4 5 6 7 8; 1: 0 2 5 6; 2: 1 3 6 7 8 12; 3: 2 4 8 12; 4: 0 3 5 7 12; 5: 0 1 4 6 7 8; 6: 0 1 2 5; 7: 0 2 4 5 9 10 11; 8: 0 2 3 5 9 10 11 12; 9: 7 8 10 11; 10: 7 8 9 11; 11: 7 8 9 10; 12: 2 3 4 8\}
  \item \{0: 1 4 5 6 7 8; 1: 0 2 5 6 12; 2: 1 3 6 7 8 12; 3: 2 4 8 12; 4: 0 3 5 7; 5: 0 1 4 6 7 8; 6: 0 1 2 5 12; 7: 0 2 4 5 9 10 11 12; 8: 0 2 3 5 9 10 11 12; 9: 7 8 10 11; 10: 7 8 9 11; 11: 7 8 9 10; 12: 1 2 3 6 7 8\}
\end{enumerate}

\subsection*{Adjacency lists of 4-vertex-critical $(P_5,\overline{P_3+P_2})$-free graphs}

\begin{enumerate}
  \item \{0: 1 2 3; 1: 0 2 3; 2: 0 1 3; 3: 0 1 2\}
  \item \{0: 1 4 5; 1: 0 2 5; 2: 1 3 5; 3: 2 4 5; 4: 0 3 5; 5: 0 1 2 3 4\}
  \item \{0: 1 4 5; 1: 0 2 5 6; 2: 1 3 6; 3: 2 4 6; 4: 0 3 5; 5: 0 1 4; 6: 1 2 3\}
  \item \{0: 1 4 5 6; 1: 0 2 5 6; 2: 1 3 5; 3: 2 4 6; 4: 0 3 5; 5: 0 1 2 4; 6: 0 1 3\}
  \item \{0: 1 4 5 6; 1: 0 2 5 6; 2: 1 3 5 6; 3: 2 4 6; 4: 0 3 5; 5: 0 1 2 4; 6: 0 1 2 3\}
  \item \{0: 1 4 5 6; 1: 0 2 5 6; 2: 1 3 6; 3: 2 4 6; 4: 0 3 5; 5: 0 1 4; 6: 0 1 2 3\}
\end{enumerate}

\subsection*{Adjacency lists of 5-vertex-critical $(P_5,\overline{P_3+P_2})$-free graphs}

\begin{enumerate}
  \item \{0: 1 2 3 4; 1: 0 2 3 4; 2: 0 1 3 4; 3: 0 1 2 4; 4: 0 1 2 3\}
  \item \{0: 1 4 5 6; 1: 0 2 5 6; 2: 1 3 5 6; 3: 2 4 5 6; 4: 0 3 5 6; 5: 0 1 2 3 4 6; 6: 0 1 2 3 4 5\}
  \item \{0: 1 4 5 6 7; 1: 0 2 5 6 7; 2: 1 3 5 7; 3: 2 4 6 7; 4: 0 3 5 7; 5: 0 1 2 4 7; 6: 0 1 3 7; 7: 0 1 2 3 4 5 6\}
  \item \{0: 1 4 5 6 7; 1: 0 2 5 6 7; 2: 1 3 6 7; 3: 2 4 6 7; 4: 0 3 5 6; 5: 0 1 4 6; 6: 0 1 2 3 4 5 7; 7: 0 1 2 3 6\}
  \item \{0: 1 4 5 6 7; 1: 0 2 5 6 7; 2: 1 3 5 6 7; 3: 2 4 6 7; 4: 0 3 5 7; 5: 0 1 2 4 7; 6: 0 1 2 3 7; 7: 0 1 2 3 4 5 6\}
  \item \{0: 1 4 5 6; 1: 0 2 5 6 7; 2: 1 3 6 7; 3: 2 4 6 7; 4: 0 3 5 6; 5: 0 1 4 6; 6: 0 1 2 3 4 5 7; 7: 1 2 3 6\}
  \item \{0: 1 4 5 6; 1: 0 2 5 6 7 8; 2: 1 3 7 8; 3: 2 4 7 8; 4: 0 3 5 6; 5: 0 1 4 6; 6: 0 1 4 5; 7: 1 2 3 8; 8: 1 2 3 7\}
  \item \{0: 1 4 5 6 7 8; 1: 0 2 5 6 7 8; 2: 1 3 6 7; 3: 2 4 7 8; 4: 0 3 5 6 8; 5: 0 1 4 6 8; 6: 0 1 2 4 5; 7: 0 1 2 3 8; 8: 0 1 3 4 5 7\}
  \item \{0: 1 4 5 6 7 8; 1: 0 2 5 6 7; 2: 1 3 5 6 8; 3: 2 4 6 7 8; 4: 0 3 5 7 8; 5: 0 1 2 4 8; 6: 0 1 2 3 7 8; 7: 0 1 3 4 6; 8: 0 2 3 4 5 6\}
  \item \{0: 1 4 5 6; 1: 0 2 5 6; 2: 1 3 6 7 8; 3: 2 4 7 8; 4: 0 3 5 7 8; 5: 0 1 4 6; 6: 0 1 2 5; 7: 2 3 4 8; 8: 2 3 4 7\}
  \item \{0: 1 4 5 6 7; 1: 0 2 5 6 8; 2: 1 3 6 8; 3: 2 4 7 8; 4: 0 3 5 7; 5: 0 1 4 6 7; 6: 0 1 2 5 8; 7: 0 3 4 5; 8: 1 2 3 6\}
  \item \{0: 1 4 5 6 7; 1: 0 2 5 6 7; 2: 1 3 6 7 8; 3: 2 4 7 8; 4: 0 3 5 7 8; 5: 0 1 4 6 8; 6: 0 1 2 5 8; 7: 0 1 2 3 4; 8: 2 3 4 5 6\}
  \item \{0: 1 4 5 6 7; 1: 0 2 5 6 7; 2: 1 3 6 7 8; 3: 2 4 7 8; 4: 0 3 5 7 8; 5: 0 1 4 6; 6: 0 1 2 5; 7: 0 1 2 3 4 8; 8: 2 3 4 7\}
  \item \{0: 1 4 5 6 7 8; 1: 0 2 5 6 7; 2: 1 3 6 7; 3: 2 4 7 8; 4: 0 3 5 7 8; 5: 0 1 4 6 8; 6: 0 1 2 5 8; 7: 0 1 2 3 4; 8: 0 3 4 5 6\}
  \item \{0: 1 4 5 6 7 8; 1: 0 2 5 6 7; 2: 1 3 6 7 8; 3: 2 4 7 8; 4: 0 3 5 7 8; 5: 0 1 4 6; 6: 0 1 2 5 8; 7: 0 1 2 3 4 8; 8: 0 2 3 4 6 7\}
  \item \{0: 1 4 5 6 7 8; 1: 0 2 5 6 7 8; 2: 1 3 6 7 8; 3: 2 4 7 8; 4: 0 3 5 7; 5: 0 1 4 6; 6: 0 1 2 5 8; 7: 0 1 2 3 4 8; 8: 0 1 2 3 6 7\}
  \item \{0: 1 4 5 6 7; 1: 0 2 5 6 7 8; 2: 1 3 6 7 8; 3: 2 4 7 8; 4: 0 3 5 6; 5: 0 1 4 6; 6: 0 1 2 4 5; 7: 0 1 2 3 8; 8: 1 2 3 7\}
  \item \{0: 1 4 5 6 7; 1: 0 2 5 6 7 8; 2: 1 3 6 7 8; 3: 2 4 7 8; 4: 0 3 5 6; 5: 0 1 4 6 8; 6: 0 1 2 4 5; 7: 0 1 2 3 8; 8: 1 2 3 5 7\}
  \item \{0: 1 4 5 6 8; 1: 0 2 5 6 7 8; 2: 1 3 6 7; 3: 2 4 6 7; 4: 0 3 5 6 8; 5: 0 1 4 6 8; 6: 0 1 2 3 4 5; 7: 1 2 3 8; 8: 0 1 4 5 7\}
  \item \{0: 1 4 5 6; 1: 0 2 5 6 7 8; 2: 1 3 6 7 8; 3: 2 4 7 8; 4: 0 3 5 6; 5: 0 1 4 6; 6: 0 1 2 4 5; 7: 1 2 3 8; 8: 1 2 3 7\}
\end{enumerate}


\begin{thebibliography}{10}

\bibitem{BM08}
J.~A. Bondy and U.~S.~R. Murty.
\newblock {\em Graph Theory}.
\newblock Springer, 2008.

\bibitem{hog}
G.~Brinkmann, K.~Coolsaet, J.~Goedgebeur, and H.~M{\'e}lot.
\newblock House of graphs: a database of interesting graphs.
\newblock {\em Discrete Applied Mathematics}, 161:311--314, 2013.
\newblock Available at \url{https://hog.grinvin.org/}.

\bibitem{BHS09}
D.~Bruce, C.~T. Ho{\`a}ng, and J.~Sawada.
\newblock A certifying algorithm for 3-colorability of {$P_5$}-free graphs.
\newblock In {\em Proceedings of the 20th International Symposium on Algorithms and
  Computation}, Lecture Notes in Computer Science 5878, pages 594--604, 2009.

\bibitem{CHLS19}
Q.~Cai, S.~Huang, T.~Li, and Y.~Shi.
\newblock Vertex-critical (${P}_5$, banner)-free graphs.
\newblock In Yijia Chen, Xiaotie Deng, and Mei Lu, editors, {\em Frontiers in
  Algorithmics - 13th International Workshop, {FAW} 2019, Sanya, China, April
  29--May 3, 2019, Proceedings}, volume 11458 of {\em Lecture Notes in Computer
  Science}, pages 111--120, 2019.

\bibitem{CGHS21}
K.~Cameron, J.~Goedgebeur, S.~Huang, and Y.~Shi.
\newblock $k$-critical graphs in ${P}_5$-free graphs.
\newblock {\em Theoretical Computer Science}, 864:80--91, 2021.

\bibitem{CHM19}
K.~Cameron, S.~Huang, and O.~Merkel.
\newblock A bound for the chromatic number of (${P}_5$,gem)-free graphs.
\newblock {\em Bulletin of the Australian Mathematical Society}, 100:182--188,
  2019.
  
\bibitem{CGSZ20}
M.~Chudnovsky, J.~Goedgebeur, O.~Schaudt, and M.~Zhong.
\newblock Obstructions for three-coloring graphs without induced paths on six
  vertices.
\newblock {\em Journal of Combinatorial Theory, Series B}, 140:45-83, 2020.  

\bibitem{CKMM20}
M.~Chudnovsky, T.~Karthick, P.~Maceli, and F.~Maffray.
\newblock Coloring graphs with no induced five-vertex path or gem.
\newblock {\em Journal of Graph Theory}, 95:527--542, 2020.

\bibitem{CRST06}
M.~Chudnovsky, N.~Robertson, P.~Seymour, and R.~Thomas.
\newblock The strong perfect graph theorem.
\newblock {\em Annals of Mathematics}, 164:51--229, 2006.

\bibitem{DHHMMP17}
H.~S. Dhaliwal, A.~M. Hamel, C.~T. Ho\`{a}ng, F.~Maffray, T.~J.~D. McConnell,
  and S.~A. Panait.
\newblock On color-critical (${P}_5$, co-${P_5}$)-free graphs.
\newblock {\em Discrete Applied Mathematics}, 216:142--148, 2017.

\bibitem{Di51}
G.~A. Dirac.
\newblock Note on the colouring of graphs.
\newblock {\em Mathematische Zeitschrift}, 54:347--353, 1951.

\bibitem{Di52}
G.~A. Dirac.
\newblock A property of 4-chromatic graphs and some remarks on critical graphs.
\newblock {\em Journal of the London Mathematical Society}, 27:85--92, 1952.

\bibitem{Di52i}
G.~A. Dirac.
\newblock Some theorems on abstract graphs.
\newblock {\em Proceedings of the London Mathematical Society}, 2:69--81, 1952.

\bibitem{Ga63}
T.~Gallai.
\newblock Kritische graphen {I}.
\newblock {\em Publications of the Mathematical Institute of the Hungarian Academy of Sciences}, 8:165--92, 1963.

\bibitem{Ga63i}
T.~Gallai.
\newblock Kritische graphen {II}.
\newblock {\em Publications of the Mathematical Institute of the Hungarian Academy of Sciences}, 8:373--395, 1963.

\bibitem{criticalpfree-site}
J.~Goedgebeur.
\newblock Homepage of generator for $k$-critical $\mathcal{H}$-free graphs:
  \url{https://caagt.ugent.be/criticalpfree/}.

\bibitem{GS18}
J.~Goedgebeur and O.~Schaudt.
\newblock Exhaustive generation of $k$-critical $\mathcal{H}$-free graphs.
\newblock {\em Journal of Graph Theory}, 87:188--207, 2018.


\bibitem{HH17}
P.~Hell and S.~Huang.
\newblock Complexity of coloring graphs without paths and cycles.
\newblock {\em Discrete Applied Mathematics}, 216:211--232, 2017.

\bibitem{HMRSV15}
C.~T. Ho\`{a}ng, B.~Moore, D.~Recoskiez, J.~Sawada, and M.~Vatshelle.
\newblock Constructions of $k$-critical ${P_5}$-free graphs.
\newblock {\em Discrete Applied Mathematics}, 182:91--98, 2015.

\bibitem{HLS19}
S.~Huang, T.~Li, and Y.~Shi.
\newblock Critical $({P}_6,banner)$-free graphs.
\newblock {\em Discrete Applied Mathematics}, 258:143--151, 2019.

\bibitem{KP19}
M.~Kami\'nski and A.~Pstrucha.
\newblock Certifying coloring algorithms for graphs without long induced paths.
\newblock {\em Discrete Applied Mathematics}, 261:258--267, 2019.

\bibitem{KY14}
A.~V. Kostochka and M.~Yancey.
\newblock Ore's conjecture on color-critical graphs is almost true.
\newblock {\em Journal of Combinatorial Theory, Series B}, 109:73--101, 2014.

\bibitem{MM12}
F.~Maffray and G.~Morel.
\newblock On $3$-colorable {$P_5$}-free graphs.
\newblock {\em SIAM Journal on Discrete Mathematics}, 26:1682--1708, 2012.

\bibitem{Or67}
O.~Ore.
\newblock {\em The Four Color Problem}.
\newblock Academic Press, 1967.

\end{thebibliography}
\end{document}